\documentclass[11pt]{amsart}
\usepackage{amssymb}
\usepackage{amscd}
\usepackage{color}
\usepackage{url}
\usepackage{hyperref}
\usepackage{xcolor}
\usepackage{graphicx}

%%%%%%%%%%%%%%%%%%%%%
\addtolength{\textheight}{.4in}
\addtolength{\topmargin}{-.2in}
\addtolength{\textwidth}{1.6in}
\addtolength{\oddsidemargin}{-.7in}
\addtolength{\evensidemargin}{-.7in}
%\addtolength{\itemsep}{1cm}
\parindent=12pt
%%%%%%%%%%%%%%%%%%%%%

\hyphenation{o-pe-ra-tor}
\hyphenation{a-na-ly-sis}

\newenvironment{demode}
{\noindent {{\it Proof of }}}%
{\par \hfill \fbox{}}

\numberwithin{equation}{section}

%    Absolute value and norm notations

\def\re{\mathop{\rm Re}\nolimits}
\def\ker{\mathop{\rm ker}\nolimits}
\def\x{\mathop{\mathbf{x}}\nolimits}

%%%%%%%%%%%%%%%%%%%%%%%%%%%%%%%%%% by Jonathan

\theoremstyle{plain}
\newtheorem{theorem}{Theorem}[section]
\newtheorem{thm}[theorem]{Theorem}
\newtheorem{thm*}{Theorem}

\newtheorem{cor}[theorem]{Corollary}

\newtheorem{prop}[theorem]{Proposition}
\theoremstyle{definition}

\theoremstyle{remark}
\newtheorem{remark}[theorem]{Remark}

\newtheorem*{thRhaly}{{\textbf{\rm Theorem (Rhaly, 1989)}}}
\newtheorem*{thRhalysp}{{\textbf{\rm Theorem (Spectrum of $R_{\textbf{a}}$)}}}

\newcommand{\ol}{\overline}

\def\CC{\mathbb C}
\def\DD{\mathbb D}
\def\NN{\mathbb N}

\def\GH{\mathcal H}

\def\beginpf{\begin{proof}}
\def\endpf{\end{proof}}
\def\beq{\begin{equation}}
\def\eeq{\end{equation}}
\def\diag{\mathop{\rm diag}\nolimits}

\newcommand{\xdownarrow}[1]{%
  {\left\downarrow\vbox to #1{}\right.\kern-\nulldelimiterspace}
}

%\newenvironment{thmRhaly}
%{\noindent {{\textbf{\rm Theorem (Rhaly, 1989)} }}}

\def\ber{\begin{eqnarray*}}
\def\eer{\end{eqnarray*}}

\usepackage{xcolor}
\newcommand{\0}{{\color{lightgray}0}}

%%%%%%%%%%%%%%%%%%%%%%%%%%%%%%%%%%%%

%    Blank box placeholder for figures (to avoid requiring any
%    particular graphics capabilities for printing this document).

%\allowdisplaybreaks

\begin{document}

\title[Rhaly operators]{Rhaly operators: more on generalized Ces\`aro operators}

%   Information for first author
\author{Eva A. Gallardo-Guti\'{e}rrez}
%    Address of record for the research reported here
\address{Eva A. Gallardo-Guti\'errez \newline
Departamento de An\'alisis Matem\'atico y Matem\'atica Aplicada,\newline
Facultad de Matem\'aticas,
\newline Universidad Complutense de
Madrid, \newline
Plaza de Ciencias 3, 28040 Madrid,  Spain
\newline
and Instituto de Ciencias Matem\'aticas ICMAT,
\newline Madrid,  Spain }
\email{eva.gallardo@mat.ucm.es}

%   Information for scond author
\author{Jonathan R. Partington}
%    Address of record for the research reported here
\address{Jonathan R. Partington, \newline
School of Mathematics, \newline
University of Leeds, \newline
Leeds LS2 9JT, United Kingdom}
\email{J.R.Partington@leeds.ac.uk}

\thanks{Both authors are partially supported by Plan Nacional  I+D grant no. PID2022-137294NB-I00, Spain. First author is also supported by
 the Spanish Ministry of Science and Innovation, through the ``Severo Ochoa Programme for Centres of Excellence in R\&D'' (CEX2019-000904-S) and from the Spanish National Research Council, through the ``Ayuda extraordinaria a Centros de Excelencia Severo Ochoa'' (20205CEX001). }

%    General info
\subjclass[2010]{Primary 47A15, 47A55, 47B15}

\date{August 2024}

\keywords{Ces\`aro operator, Rhaly operator, invariant subspace, spectrum, terraced operator, Hankel operator, contraction semigroup}

% ----------------------------------------------------------------------

\begin{abstract}
Rhaly operators, as   generalizations of the Ces\`aro operator, are studied from the standpoint of view of spectral theory and invariant subspaces, extending previous results by Rhaly and Leibowitz to a framework where generalized Ces\`aro operators arise naturally.
\end{abstract}

% -% ----------------------------------------------------------------------

\maketitle

\section{Introduction and preliminaries}
Let $\textbf{a}=(a_n)_{n\geq 0}$ be a sequence of complex numbers and
\begin{equation*}
R_{\textbf{a}} := \begin{bmatrix}
a_0 & \0 & \0 & \0 & \0 &  \cdots\\[3pt]
a_1 & a_1 & \0 & \0 & \0 & \cdots\\[3pt]
a_2 & a_2 & a_2 & \0 & \0 & \cdots\\[3pt]
a_3 & a_3 & a_3 & a_3 & \0 & \cdots\\[3pt]
a_4 & a_4 & a_4 & a_4 & a_4 & \cdots\\
\vdots & \vdots & \vdots & \vdots & \vdots & \ddots
\end{bmatrix}.
\end{equation*}
the associated Rhaly matrix (terraced matrix). In \cite{Rhaly1}, Rhaly showed that whenever the limit
$$
L=\lim_{n\to \infty} (n+1)|a_n|
$$
exists, it provides a test for the boundedness of $R_{\textbf{a}}$ as an operator acting on the classical sequence space $\ell^2$ (see also \cite{Rhaly2}). More precisely,
\begin{thRhaly}
\mbox{ }\newline
\vspace*{-0,3cm}
\begin{enumerate}
\item If $L< +\infty$ and $D$ denotes the diagonal operator with diagonal entries $\{(n+1)a_n :\, n =0, 1, 2, 3, \dots\}$,
then $R_{\textbf{a}}$ is a bounded operator on $\ell^2$ with
$$\|R_{\textbf{a}}\|\leq \|D\|+ \sup \{\sqrt{n(n+1)} |a_n|:\, n =0, 1, 2, 3, \dots\}.$$
Moreover,
if $L=0$, then $R_{\textbf{a}}$ is a compact operator (with the numbers $\{a_n\}_{n\geq 0}$ in the point spectra of both $R_{\textbf{a}}$ and its adjoint $R_{\textbf{a}}^*$).
\item If $L=+\infty$, then $R_{\textbf{a}}$ is not a bounded operator on $\ell^2$.
\end{enumerate}
\end{thRhaly}

The classical Ces\'aro matrix $\mathcal{C}$ appears as a particular instance of a Rhaly matrix where $\textbf{a}=(1/(n+1))_{n\geq 0}$. In particular, $\mathcal{C}$ takes a complex sequence $\textbf{x}=(x_0, x_1, x_2\dots )$ to that with $n$th entry:
$$
(\mathcal{C}\, \textbf{x})_n= \frac{1}{n+1} \sum_{k=0}^n x_k, \qquad (n\geq 0),
$$
and, identifying sequences with Taylor coefﬁcients of power series, $\mathcal{C}$ can be expressed as the integral operator acting on holomorphic functions $f(z)=\sum_{k=0}^{\infty} x_k z^k$ of the unit disc $\DD$ as
\begin{equation}\label{integral definition}
\mathcal{C}(f)(z)= \left \{ \begin{array}{ll}
\displaystyle \frac{1}{z} \int_0^z \frac{f(\xi)}{1-\xi} \, d\xi, & z\in \DD\setminus\{ 0\}, \\
\noalign{\medskip}
f(0)&  z=0;
\end{array} \right.
\end{equation}
for $z\in \DD$. There is an extensive literature on the Ces\`aro operator, and more general, on integral operators, acting on a large variety of spaces of analytic functions regarding its boundedness, compactness or spectral picture (see the recent survey by Ross \cite{ross}). In the pioneering work of \cite{BHS}, Brown, Halmos and Shields computed the spectrum of $\mathcal{C}$ in $\ell^2$ showing that it is the closed disc:
$$\sigma(\mathcal{C})=\{z\in \mathbb{C}:\; |z-1|\leq 1\},$$
while the point spectrum of $\mathcal C$ is empty and that of $\mathcal{C}^*$ the open disc
$$\sigma_p(\mathcal{C}^*)=\{z\in \mathbb{C}:\; |z-1|<1\}.$$
Likewise, they proved that $\mathcal{C}$ is hyponormal in $\ell^2$, namely, the conmutant $[\mathcal{C}^*,\mathcal{C}]$ is positive semi-definite. Later on, Kriete and Trutt \cite{KT71} established a remarkable result proving that the Ces\`aro operator is, indeed, \emph{subnormal}, namely,  $\mathcal{C}$  has a normal extension. % (see also \cite{KT74}).

In the particular instances of $\textbf{a}=(1/(n+1)^\alpha)_{n\geq 0}$ for $\alpha>1$, it turns out that all Rhaly matrices induce compact and nonhyponormal operators. In case that $\textbf{a}$ is a sequence of real numbers, Rhaly found necessary conditions for the hyponormality of $R_{\textbf{a}}$ (see \cite{Rhaly1}) but the general question of
characterizing those sequences $\textbf{a}$ inducing hyponormal operators $R_{\textbf{a}}$
or moreover, subnormal operators, remains open.

\smallskip

Regarding the spectrum of $R_{\textbf{a}}$, Rhaly \cite{Rhaly1} was able to extend Brown, Halmos and Shields' techniques to prove  the following

\begin{thRhalysp} \label{Rhaly spectrum}
Let $\textbf{a}=(a_n)_{n\geq 0}$ be a sequence of positive numbers. Assume $a_i\neq a_j$ for all $i\neq j$ and $L=\lim_{n\to \infty} (n+1)a_n$ exists and  is finite.
Then,
\begin{enumerate}
\item If $L=0$, then $\sigma_p(R_{\textbf{a}}^*)= \textbf{a}$;
\item If $L>0$, then
$$\textbf{a}\cup \{z\in \mathbb{C}:\; |z-L|< L\}\subseteq \sigma_p(R_{\textbf{a}}^*) \subseteq \textbf{a}\cup \{z\in \mathbb{C}:\; |z-L|\leq L\}\setminus \{0\} ;$$
\item $\textbf{a}\cap (2L, +\infty)\subseteq \sigma_p(R_{\textbf{a}}) \subseteq \textbf{a}\cap [2L, +\infty);$
\item $\sigma(R_{\textbf{a}})= \textbf{a}\cup \{z\in \mathbb{C}:\; |z-L|\leq L\}.$
\end{enumerate}
\end{thRhalysp}
It is worth pointing out that Leibowitz in \cite{Leibowitz} and \cite{Leibowitz2} had also studied the spectrum of particular Rhaly matrices as operators acting  on $\ell^p$-spaces, $1<p<\infty$. For the spectrum of the Ces\`aro operator in $\ell^p\ (1<p<\infty)$ see, for instance, \cite{gonzalez}.

\smallskip

As might be expected, if the sequence $\textbf{a}$  corresponds to a \emph{moment sequence} of a finite positive (or complex) Borel measure, the operator properties of $R_{\textbf{a}}$ are more approachable. Nevertheless, in this paper we shall work mostly with the assumption that $((n+1)a_n)_{n\geq 0}$ is a bounded sequence,
not necessarily tending to a limit, which is a broader scenario. Indeed, in such a case, the linear operator $D_{\textbf{a}}$
with diagonal entries $((n+1)a_n)_{n\geq 0}$ is bounded, so $R_{\textbf{a}}=D_{\textbf{a}} \mathcal{C}$ defines also a bounded operator. In general the condition that $((n+1)a_n)_{n\geq 0}$
is bounded is not necessary for the boundedness of $R_{\textbf{a}}$, as shown by the example of
Leibowitz~\cite[p.~283]{Leibowitz} with
\[
a_n=\begin{cases}
n^{-7/8} & \hbox{if $n$ is a perfect square},\\
0 & \hbox{otherwise},
\end{cases}
\]
but, it turns out to be necessary and sufficient for the boundedness of $R_{\textbf a}$ if $(a_n)$ is a moment sequence as shown by Galanopoulos, Girela and Merch\'an in \cite{GGM}.
At this regard, in both \cite{GGM, GGM2}, the authors deal with boundedness of $R_{\textbf{a}}$ as integral operator (along the lines of
\eqref{integral definition}) not only in Hardy spaces, but also in weighted Bergman spaces, BMOA or the Bloch space (for boundedness of Ces\`aro-like operators in Hardy spaces, we refer also to the previous works by Stempak \cite{Stempak} and Andersen \cite{Andersen}).

\smallskip

In this setting, the main goal of this work is twofold. On one hand, it is taking further the study of the spectrum of Rhaly operators in $\ell^2$ providing concrete spectral picture when $\textbf{a}$ is a moment sequence of a finite positive Borel measure. In particular, the results in Section \ref{Section 2} extend Rhaly's theorem on the spectrum of $R_{\textbf{a}}$ previously stated and exhibit examples of finite positive Borel measures $\mu$ such that the corresponding moment sequence $(\mu_n)_{n\geq 0}$ induces Rhaly operators $R_{\mathbf{\mu}}$ in $\ell^2$ with different spectral configuration. Likewise, we shall give a conceptually simpler proof that the \emph{numerical range} of $R_{\mathbf{\mu}}$ is always contained in the closed right half plane $\overline{\CC_+}=\{z\in \CC:\; \re z\geq 0\}$, which implies, in particular, the same containment for the spectrum $\sigma(R_{\mathbf{\mu}})$. Consequently, it follows that $R_{\mathbf{\mu}}$ is related to the infinitesimal generator of a contraction semigroup (see, for example, \cite[Cor. II.3.17, Prop. II.3.23]{EN} and \cite[Thm. IV.4.1]{SFBK}).

\smallskip

This latter fact is closely related to our second concern regarding Rhaly operators, namely, the study of their invariant subspaces along the lines of the authors' works \cite{GP24} and \cite{GPR24}. Relating the Ces\`aro operator to some semigroups, either as an infinitesimal generator or as the resolvent operator at 0,  has turned out to be  fruitful  in order to compute norms and spectra (see Siskakis' work \cite{siskakis}), determine subnormality (see Cowen's paper \cite{cowen}) or even to study local spectral properties (see the recent work \cite{GG}). In \cite{GP24} the authors linked the invariant subspaces of $\mathcal{C}$ to those of the right-shift semigroup $\{S_{\tau}\}_{\tau\geq 0}$ acting on a particular weighted $L^2(\mathbb{R}, w(y)dy)$ as an approach towards describing completely the lattice of the invariant subspaces of $\mathcal{C}$. In Section \ref{Section 3}, we undertake this study and show a connection with the invariant subspaces of a family (not necessary a semigroup) of weighted composition operators $\{W_t\}_{t\geq 0}$ acting on the classical Hardy space $H^2$.

\smallskip

A related family of weighted composition operators arise also naturally linked to the Hilbert matrix $\mathcal{H}$, as it was shown by  Diamantopoulos and Siskakis in \cite{DS00}. Finally, in Section \ref{Section 4}, we discuss a similar approach to the invariant subspaces as well as the fact that $\mathcal{H}$ and its generalizations are also related to the infinitesimal generator of a contraction semigroup.

\section{Spectrum of Rhaly operators: a step further}\label{Section 2}

In this section we study the spectrum of Rhaly operators when the induced sequence is a \emph{moment sequence}, which extends Rhaly's theorem recalled in the Introduction.
We start by determining the point spectrum, that is, the set consisting of eigenvalues.

\begin{theorem}\label{point spectra}
Let $\mu$ be a positive finite Borel measure in $[0,1)$ and $R_{\mathbf{\mu}}$ the Rhaly operator associated to the moment $(\mu_n)_{n\geq 0}$. Assume $R_{\mathbf{\mu}}$ is a bounded operator in $\ell^2$ and let
\begin{equation}\label{moment sum}
s_n=\sum_{j=0}^n \mu_j = \int_0^1 \frac{1-t^{n+1}}{1-t} \, d\mu(t).
\end{equation}
Then, for a given index $k$, $\mu_k \in \sigma_p(R_{\mathbf{\mu}})$ if and only if the sequence $(\mu_n \exp(s_n/\mu_k))_{n\geq 0}$ is in $\ell^2$. Moreover,
$$
\sigma_p(R_{\mathbf{\mu}})=\{\mu_k:\, (\mu_n \exp(s_n/\mu_k))_{n\geq 0} \in \ell^2\}.
$$
\end{theorem}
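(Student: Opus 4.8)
The plan is to solve the eigenvalue equation $R_{\mathbf\mu} x = \mu_k x$ coordinatewise and extract an explicit recursion for the candidate eigenvector. Writing $R_{\mathbf\mu} x = y$ with $y_n = \mu_n\sum_{j=0}^n x_j$, setting $S_n = \sum_{j=0}^n x_j$, the equation $R_{\mathbf\mu} x = \lambda x$ becomes $\mu_n S_n = \lambda x_n = \lambda(S_n - S_{n-1})$ for $n\ge 1$ (with $S_{-1}=0$), i.e. $S_n(\lambda - \mu_n) = \lambda S_{n-1}$, so that $S_n = \dfrac{\lambda}{\lambda-\mu_n} S_{n-1}$ whenever $\lambda \ne \mu_n$. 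First I would dispose of the case $\lambda = \mu_k$: since the $\mu_n$ need not be distinct here, but in any event $S_k(\mu_k - \mu_k) = \mu_k S_{k-1}$ forces $S_{k-1}=0$ (as $\mu_k>0$), and then $x_k = S_k$ is a free parameter; for $n>k$ the recursion $S_n = \frac{\mu_k}{\mu_k-\mu_n}S_{n-1}$ runs freely provided $\mu_n \ne \mu_k$ for $n>k$ — a point that needs a brief remark, handled by noting that if $\mu_n=\mu_k$ for some $n>k$ then $S_{n-1}=0$ propagates back and kills the eigenvector, so that index $k$ gives no eigenvalue; thus one may assume $\mu_n\ne\mu_k$ for $n>k$.

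Next I would convert the product $S_n = \prod_{j=k+1}^n \frac{\mu_k}{\mu_k - \mu_j}\cdot x_k$ into the claimed exponential asymptotics. Taking $x_k=1$, one has $\log\prod_{j=k+1}^n \frac{1}{1-\mu_j/\mu_k} = \sum_{j=k+1}^n -\log(1-\mu_j/\mu_k) = \sum_{j=k+1}^n\bigl(\mu_j/\mu_k + O(\mu_j^2/\mu_k^2)\bigr)$, and since $(\mu_j)$ is bounded — indeed $\mu_j\to 0$ as $\mu$ is carried by $[0,1)$, or at worst $\mu_j = O(1)$ and $R_{\mathbf\mu}$ bounded forces $(n+1)\mu_n$ bounded so $\mu_j\to 0$ — the error series $\sum_j \mu_j^2$ converges. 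Hence $S_n = C_k \exp(s_n/\mu_k)(1+o(1))$ for a nonzero constant $C_k$ depending only on $k$, where $s_n = \sum_{j=0}^n\mu_j$ (the finitely many initial terms $j\le k$ just contribute to $C_k$). Then $x_n = S_n - S_{n-1} = S_{n-1}\bigl(\frac{\mu_k}{\mu_k-\mu_n}-1\bigr) = S_{n-1}\frac{\mu_n}{\mu_k-\mu_n}$, so $|x_n| \asymp \mu_n \exp(s_n/\mu_k)$ up to bounded multiplicative factors. Therefore $x\in\ell^2$ if and only if $(\mu_n\exp(s_n/\mu_k))_n\in\ell^2$, which is the asserted criterion; and $\mu_k\in\sigma_p(R_{\mathbf\mu})$ exactly when a nonzero solution lies in $\ell^2$, giving the displayed description of $\sigma_p(R_{\mathbf\mu})$ once one checks that eigenvalues can only be among the $\mu_k$ (if $\lambda\notin\{\mu_n\}$ the recursion gives $S_n = \prod_{j=1}^n\frac{\lambda}{\lambda-\mu_j}x_0$; since $\mu_j\to0$ the factors tend to $1$ and the product does not tend to $0$, while $S_n-S_{n-1}=x_n\to0$ forces $S_n$ to converge to a finite limit, which together with $x_n\to0$ and boundedness of $R_{\mathbf\mu}$ can be ruled out — this needs the same asymptotic analysis to show $x\notin\ell^2$, or equivalently $0\notin\sigma_p$ and no other $\lambda$ works).

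The identity $s_n = \int_0^1\frac{1-t^{n+1}}{1-t}\,d\mu(t)$ in \eqref{moment sum} is immediate from $\mu_j = \int_0^1 t^j\,d\mu(t)$ and summing the geometric series. The main obstacle is the bookkeeping in the asymptotic step: justifying that the multiplicative errors $\prod_j(1+O(\mu_j^2))$ stay bounded above and below away from $0$ uniformly in $n$ (so they can be absorbed into the $\asymp$), and—more delicately—that the factor $\frac{\mu_n}{\mu_k-\mu_n}$ is comparable to $\mu_n$, which requires $\mu_n$ bounded away from $\mu_k$ for large $n$; since $\mu_n\to 0 < \mu_k$ (recall $\mu_k>0$ for $k\ge 0$ as $\mu\ne 0$, and one disposes of the degenerate case $\mu=0$ separately), this holds for all large $n$. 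I would also need to handle the finitely many small indices where $\mu_n$ could equal or exceed $\mu_k$ — these only affect the constant $C_k$ and the $\ell^2$ membership is a tail condition, so they are harmless. With these points checked, the equivalence and the formula for $\sigma_p(R_{\mathbf\mu})$ follow.
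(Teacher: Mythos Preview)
Your approach is essentially the same as the paper's: solve the eigenvalue equation coordinatewise, derive a product formula for the candidate eigenvector, and convert it to the exponential asymptotic $\mu_n\exp(s_n/\mu_k)$ via $-\log(1-t)=t+O(t^2)$ together with $\sum\mu_j^2<\infty$ (which holds because boundedness of $R_\mu$ forces $\mu_n=O(1/(n+1))$).

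Two places where the paper is cleaner. First, for the containment $\sigma_p(R_\mu)\subseteq\{\mu_n\}$ the paper simply invokes the triangular structure (via Apostol's decomposition): if $\lambda\neq\mu_0$ then the first equation $\mu_0x_0=\lambda x_0$ already gives $x_0=0$, and induction kills all $x_j$. Your sketch for $\lambda\notin\{\mu_n\}$ instead writes $S_n=\prod_{j=1}^n\frac{\lambda}{\lambda-\mu_j}\,x_0$ and then argues about the product not tending to zero; this is muddled, since the very first equation forces $x_0=S_0=0$ and the whole sequence collapses immediately---no asymptotic analysis is needed here. Second, you worry about the possibility $\mu_n=\mu_k$ for some $n>k$; the paper disposes of this by observing that for a positive measure on $[0,1)$ (not a point mass at $0$) the moment sequence is \emph{strictly} decreasing, so the $\mu_j$ are automatically distinct. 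With these two simplifications your plan matches the paper's proof.
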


Before proceeding with the proof, note that in case $\mu$ is the Lebesgue measure $m$ in $[0,1)$, $R_{\mathbf{m}}$ is the classical Ces\`aro operator $\mathcal{C}$ and the sequence $(s_n)_{n\geq 1}$ grows as $\log n$. Thus $\mu_n \exp(s_n/\mu_k)$ behaves as $n^{-1}n^{k+1}$, and is not
an $\ell^2$ sequence for any $k$. That is, we recover the well-known
result that the point spectrum of $\mathcal{C}$ is empty, proved in \cite{BHS}.

\smallskip

Likewise, if $\mu$ is a positive finite Borel measure in $[0,1)$  such that $(\mu_n) \in \ell^1$, then $(s_n)_{n\geq 1}$ is bounded and, in such a case, every $\mu_k$ is an eigenvalue of $R_{\mathbf{\mu}}$. Clearly, taking $\mu= \delta_t$, a point mass (Dirac measure) at $t$ with $0<t<1$,  we have the moment sequence $\mu_n=t^n$, $n =0,1,2,\ldots$,
and $s_n=\frac{1-t^{n+1}}{1-t}$. Accordingly, the sequence $(\mu_n \exp(s_n/\mu_k))_n$ is $(t^n \exp \frac{1-t^{n+1}}{1-t}/t^k)$,
which is clearly in $\ell^2$, so every $\mu_k$ is an eigenvalue.

\smallskip

Besides, if we take $\mu=\delta_0 + c m$ for some $c>0$, an easy computation yields that $\mu_0=1+c$ and $\mu_n= c/(n+1)$  for $n>0$. So $s_n$ grows as $c\log n$, and for
$k=0$ the sequence $(\mu_n \exp(s_n/\mu_0))_n$ behaves as $n^{-1}n^{c/(1+c)}$, which is in $\ell^2$ for $0< c<1$.

However, for $k>0$, $(\mu_n \exp(s_n/\mu_k))_n$ behaves as $n^{-1}n^{k+1}$, which does not belong to $\ell^2$. Consequently, if  $0<c<1$ and $\mu=\delta_0 + c m$, the point spectrum of  $R_{\mathbf{\mu}}$ is simply $\{\mu_0\}=\{1+c\}$.

\smallskip

In \cite{Leibowitz}, Leibowitz  considers Ces\`aro-like matrices with
$\mu_n= (n+1)^{-s}$ for $n =0,1,2,\ldots$, where $s \ge 1$. The
case $s=1$ gives the familiar Ces\`aro operator, and for $s>1$ the corresponding  operator
is compact.

We remark that for $k \ge 0$, $n \ge 0$
\begin{eqnarray*}
\int_0^1 t^n (-\log t)^k dt &=&
\int_0^\infty e^{-nx} x^k   e^{-x} \, dx \\
&=& \int_0^\infty e^{-y} y^{k} (n+1)^{-k} \, dy/(n+1)\\
&=& \Gamma(k+1)/(n+1)^{k+1}
\end{eqnarray*}
(using the substitutions $t=e^{-x}$ and $x=y/(n+1)$),
and so for $s>1$
this corresponds to the measure $\mu$ on $(0,1)$
given by $d\mu(t)=\dfrac{1}{\Gamma(s)}(-\log t)^{s-1} \, dt$.

\smallskip

Clearly, it is possible to provide further examples of finite positive Borel measures in $[0,1)$ such that $\sigma_p(R_{\mathbf{\mu}})$ is either empty, finite or an infinite set.

\medskip

\begin{demode}{Theorem \ref{point spectra}}
We start by noting that the matrix representation of the adjoint $R_{\mathbf{\mu}}^*$ is upper triangular, and  Apostol's triangular decomposition of a bounded operator $T$ states, in particular, that if  $\ker(\lambda -T)^*\neq \{ 0\}$  then $\lambda$ belongs to the diagonal of $T$ (see \cite[Corollary 3.40, (iv)]{Herrero}, for instance). In other words, if $\ker(\overline{\lambda}-R_\mu)\neq \{ 0\}$, then $\overline{\lambda} \in (\mu_n)_{n\geq 0}$. Consequently,
$$\sigma_p(R_{\mathbf{\mu}})\subseteq (\mu_n)_{n\geq 0}.$$

\smallskip

Now, let $k\geq 0$ be fixed and $\mu_k$ the corresponding $k$-th moment of $\mu$. Assume that $\x=(x_n)_{n\geq 0}$ is an eigenvector
corresponding to the eigenvalue $\mu_k$, namely,
$$\begin{bmatrix}
\mu_0 & \0 & \0 & \0 & \0 &  \cdots\\[3pt]
\mu_1 & \mu_1 & \0 & \0 & \0 & \cdots\\[3pt]
\mu_2 & \mu_2 & \mu_2 & \0 & \0 & \cdots\\[3pt]
\mu_3 & \mu_3 & \mu_3 & \mu_3 & \0 & \cdots\\[3pt]
\mu_4 & \mu_4 & \mu_4 & \mu_4 & \mu_4 & \cdots\\
\vdots & \vdots & \vdots & \vdots & \vdots & \ddots
\end{bmatrix}
\begin{bmatrix}
x_0 \\[3pt]
x_1 \\[3pt]
x_2 \\[3pt]
x_3 \\[3pt]
x_4 \\
\vdots
\end{bmatrix}
= \mu_k \begin{bmatrix}
x_0 \\[3pt]
x_1 \\[3pt]
x_2 \\[3pt]
x_3 \\[3pt]
x_4 \\
\vdots
\end{bmatrix}.
$$
The goal is to determine whether $\x \in \ell^2$.
We have
\begin{eqnarray*}
\mu_0 x_0 &=& \mu_k x_0,\\
\mu_1 x_0+\mu_1 x_1 &=& \mu_k x_1, \\
\cdots && \cdots\\
\mu_k x_0+\mu_k x_1 + \ldots \mu_k x_k &=& \mu_k x_k,\\
\mu_{k+1}x_0+\mu_{k+1}x_1 + \ldots \mu_{k+1} x_{k+1} &=& \mu_k x_{k+1}\\
\cdots && \cdots
\end{eqnarray*}
whence $x_j=0$ for $j<k$ since the $\mu_j$ are distinct (except in the
trivial case when $\mu$ is concentrated at $0$ and $R_{\mathbf{\mu}} f= \lambda f(0)$
for some $\lambda>0$). If $x_k=0$ then $\x=0$, so let us suppose without loss of generality that
$x_k=1$.

Now for $n \ge k$ we have
\begin{eqnarray}
\mu_n(x_0+ \ldots + x_n) &=& \mu_k x_n, \label{eq:mun}\\
\mu_{n+1}(x_0+\ldots+ x_{n+1}) &=&  \mu_k x_{n+1} \label{eq:mun1}
\end{eqnarray}
and taking $\mu_n \times$ \eqref{eq:mun1} $- \mu_{n+1} \times$ \eqref{eq:mun}
we have
\[
\mu_n \mu_{k} x_{n+1} -  \mu_{n+1} \mu_k x_n = \mu_n \mu_{n+1} x_{n+1},
\]
that is,
\[
x_{n+1}= \frac{\mu_{n+1}\mu_k}{\mu_n(\mu_k-\mu_{n+1})} x_n.
\]
For easier reading we write $\alpha=\mu_k$, and then for $m \ge k$ this gives (with $x_k=1$)
\begin{equation}\label{eq:xm1}
x_{m+1}= \frac{\mu_{m+1}}{\alpha} \left( \prod_{n=k}^m \frac{\alpha}{\alpha-\mu_{n+1}} \right)=\frac{\mu_{m+1}}{\alpha} \left( \prod_{n=k}^m \frac{1}{1-\mu_{n+1}/\alpha} \right).
\end{equation}
To see whether $\x=(x_n)_{n\geq 0}$ is an $\ell^2$ sequence we note that
the $\mu_n$ tend to $0$ as $n\to \infty$ since $R_{\mathbf{\mu}}$ is a bounded operator in $\ell^2$ and $\mu_n=\textrm{O}(1/(n+1))$ (see \cite[Thm.~1, Lem.~2]{GGM}, for instance). Then
\begin{equation}\label{eq1}
(1-\mu_{n+1}/\alpha)^{-1} = \exp(\mu_{n+1}/\alpha + \textrm{O}(\mu_{n+1}^2)).
\end{equation}
From \eqref{eq:xm1} and \eqref{eq1}, noting that $(\mu_n)_{n\geq 0}$ is an $\ell^2$ sequence, it follows that if $(\mu_n \exp(s_n/\alpha))_n \in \ell^2$ then  $\x=(x_n)_{n\geq 0} \in \ell^2$ also, in which case, $\mu_k \in \sigma_p(C_\mu)$. Likewise, if $(\mu_n \exp(s_n/\alpha))_n \not \in \ell^2$, then $\x=(x_n)$ is not in $\ell^2$ either, and therefore $\mu_k \not \in \sigma_p(C_\mu)$, which concludes the proof of Theorem \ref{point spectra}.
\end{demode}

\begin{remark}
In this proof, we only used the facts that $(\mu_n)_{n\geq 0}$ are positive and distinct and $\mu_n=O(1/(n+1))$. Accordingly, it holds more generally for sequences $\textbf{a}$ under this hypotheses.
\end{remark}

Our next task is studying the point spectrum of the adjoint of Rhaly operators. First, we state the following proposition, which follows as a direct application of Apostol's triangular decomposition theorem and it does not require assuming that the induced sequence is a moment one (see \cite[Corollary 3.40]{Herrero}).

\begin{prop} \label{spectrum adjoint}
Let $\textbf{a}=(a_n)_{n\geq 0}$ be a sequence of complex numbers and $R_{\textbf{a}}$ the associated Rhaly operator acting boundedly on $\ell^2$.
Then,
\begin{enumerate}
\item[(i)] $\sigma (R_{\textbf{a}}^*)=\sigma _{l}(R_{\textbf{a}}^*)=\sigma _{lre}(R_{\textbf{a}}^*)\cup \sigma _p(R_{\textbf{a}}^*)$.
\item[(ii)] Every clopen subset of $\sigma (R_{\textbf{a}}^*)$ intersects the sequence $\textbf{a}$. Moreover, every component of $\sigma (R_{\textbf{a}}^*)$
intersects the closure of $\textbf{a}$.
\item[(iii)] Every isolated point of $\sigma (R_{\textbf{a}}^*)$ belongs to $\textbf{a}$.
\end{enumerate}
\end{prop}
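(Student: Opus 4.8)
The plan is to deduce (i)--(iii) from the general structure theory of triangular operators, so the first --- and only structurally essential --- step is to recognise $R_{\textbf{a}}^*$ as such an operator. Since the terraced matrix $R_{\textbf{a}}$ is lower triangular with respect to the canonical basis $(e_n)_{n\geq 0}$ of $\ell^2$, one has $R_{\textbf{a}}^*e_j=\overline{a_j}\,(e_0+e_1+\dots+e_j)$, so $R_{\textbf{a}}^*$ leaves every finite-dimensional subspace $E_n:=\mathrm{span}\{e_0,\dots,e_{n-1}\}$ invariant; equivalently its matrix is upper triangular with diagonal sequence $(\overline{a_n})_{n\geq 0}$. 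Thus $(e_n)_{n\geq 0}$ is a triangularising basis for $R_{\textbf{a}}^*$, and items (i)--(iii) are then read off from \cite[Corollary 3.40]{Herrero} applied to this operator. (The conjugation on the diagonal plays no role in the formulation; in the moment setting of Section~\ref{Section 2} the $a_n$ are real and it is invisible.)

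For concreteness I would also carry out the soft parts directly. With respect to the splitting $\ell^2=E_n\oplus E_n^\perp$, the operator $R_{\textbf{a}}^*$ is block upper triangular with upper-left corner the finite truncation $R_{\textbf{a}}^*|_{E_n}$, whose eigenvalues are $\overline{a_0},\dots,\overline{a_{n-1}}$; every associated eigenvector lies in $E_n$ and is therefore an eigenvector of $R_{\textbf{a}}^*$ as well, so $\{\overline{a_n}:n\geq 0\}\subseteq\sigma_p(R_{\textbf{a}}^*)$ and hence the closure of the diagonal always lies in $\sigma(R_{\textbf{a}}^*)$. Next, if $\lambda-R_{\textbf{a}}^*$ is bounded below then, being upper triangular, its restriction to each $E_n$ is an injective --- hence onto --- endomorphism of $E_n$; so its range contains the dense set $\bigcup_n E_n$, and as that range is also closed, $\lambda-R_{\textbf{a}}^*$ is invertible. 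This yields $\sigma(R_{\textbf{a}}^*)=\sigma_l(R_{\textbf{a}}^*)$, and since $\sigma_l=\sigma_{ap}$ in general, it only remains to split $\sigma_{ap}(R_{\textbf{a}}^*)$ into the eigenvalues and the points at which $\lambda-R_{\textbf{a}}^*$ is injective with non-closed range; the latter points belong to $\sigma_{lre}(R_{\textbf{a}}^*)$ (a non-closed range excludes semi-Fredholmness), while $\sigma_{lre}(R_{\textbf{a}}^*)\subseteq\sigma(R_{\textbf{a}}^*)=\sigma_{ap}(R_{\textbf{a}}^*)$, which gives the remaining equality in (i).

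The genuine content --- and the step I would not try to reprove --- is the sharpened form of (ii) and (iii): that a clopen subset of $\sigma(R_{\textbf{a}}^*)$ must contain an actual term of $\textbf{a}$, not merely a limit point of it, and that every isolated point of $\sigma(R_{\textbf{a}}^*)$ is itself an entry of $\textbf{a}$. One direction of the isolated-point statement is elementary: a point isolated in $\sigma(R_{\textbf{a}}^*)$ which happens to lie in the closure of the diagonal must be one of the $\overline{a_n}$, since a point isolated in the closure of a sequence is necessarily a term of that sequence. What is left is to exclude isolated --- or, more generally, clopen --- spectral behaviour occurring outside the closure of the diagonal, and here a naive truncation argument fails: the compression of $R_{\textbf{a}}^*$ to $E_n^\perp$ is unitarily equivalent to the adjoint of the terraced operator built from the shifted sequence $(a_{n+k})_{k\geq 0}$, whose norm need not tend to $0$ (already for the Ces\`aro operator). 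Excluding this is precisely where Apostol's machinery comes in --- constancy of the Fredholm index on the components of the semi-Fredholm domain, together with the triangular approximation of $R_{\textbf{a}}^*$ --- and it is the part of the argument that genuinely rests on \cite[Corollary 3.40]{Herrero}.
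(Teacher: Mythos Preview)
Your approach is exactly the paper's own: the authors give no argument beyond observing that $R_{\textbf{a}}^*$ is upper triangular with respect to the canonical basis and then invoking \cite[Corollary~3.40]{Herrero} as a ``direct application.'' You in fact go further than the paper by supplying an elementary proof of~(i) via the finite-dimensional truncations $E_n$ and by explicitly isolating which parts of~(ii)--(iii) genuinely require Apostol's index-constancy machinery, and your remark about the conjugate diagonal $(\overline{a_n})$ versus the stated~$\textbf{a}$ is a valid caveat that the paper glosses over (harmless in the moment case where the $a_n$ are real).
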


Here $\sigma _{l}$ and $\sigma _{lre}$ denote the left spectrum, respectively the left--right essential spectrum (also known as the Wolf spectrum). We refer to Herrero's book \cite{Herrero} for references and properties.

\smallskip

The next theorem provides information about the point spectrum of $R_\mu^*$:

\begin{theorem}\label{point spectra adjoint}
Let $\mu$ be a positive finite Borel measure in $[0,1)$ and $R_{\mathbf{\mu}}$ the Rhaly operator associated to the moment $(\mu_n)_{n\geq 0}$. Assume $R_{\mathbf{\mu}}$ is a bounded operator in $\ell^2$ and let
$$
s_n=\sum_{j=0}^n \mu_j = \int_0^1 \frac{1-t^{n+1}}{1-t} \, d\mu(t).
$$
If $\lambda \in \sigma_p(R_{\mathbf{\mu}}^*)$ then $\lambda\neq 0$. Moreover, let $\gamma>0$ such that $\exp( - (1+\epsilon)\gamma s_n)$ is an $\ell^1$ sequence for all $\epsilon>0$. Then
$$
\{z \in \CC:|z-1/\gamma|<1/\gamma\} \subseteq \sigma_p(R_{\mathbf{\mu}}^*).
$$
\end{theorem}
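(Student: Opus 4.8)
The plan is to solve the eigenvalue equation for $R_{\mathbf{\mu}}^*$ explicitly and then decide $\ell^2$-membership of the resulting vector by an asymptotic estimate, in the spirit of the proof of Theorem~\ref{point spectra}. The matrix of $R_{\mathbf{\mu}}^*$ is upper triangular with $(i,j)$-entry $\mu_j$ for $i\le j$, so $R_{\mathbf{\mu}}^*\x=\lambda\x$ for $\x=(x_n)_{n\ge0}\in\ell^2$ reads $\sum_{j\ge i}\mu_j x_j=\lambda x_i$ for every $i\ge0$. Subtracting the $(i{+}1)$-th relation from the $i$-th yields $\mu_i x_i=\lambda(x_i-x_{i+1})$. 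If $\lambda=0$ this forces $\mu_i x_i=0$ for all $i$, hence $\x=0$, since $\mu_n>0$ for every $n$ (the case of $\mu$ concentrated at the origin being trivial); this proves $0\notin\sigma_p(R_{\mathbf{\mu}}^*)$. For $\lambda\ne0$ the recursion is $x_{i+1}=(1-\mu_i/\lambda)x_i$, so normalizing $x_0=1$ gives the candidate $x_n=\prod_{i=0}^{n-1}(1-\mu_i/\lambda)$; conversely, once this vector is known to lie in $\ell^2$, the quantity $\bigl(\sum_{j\ge i}\mu_j x_j\bigr)-\lambda x_i$ is independent of $i$ while both summands tend to $0$, so the vector is a genuine eigenvector.

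It remains, fixing $\lambda$ with $|\lambda-1/\gamma|<1/\gamma$, to check $\sum_n|x_n|^2<\infty$. A direct computation shows $|\lambda-1/\gamma|<1/\gamma$ is equivalent to $2\,\re(1/\lambda)>\gamma$; put $\beta:=2\,\re(1/\lambda)$, so that $\beta=(1+\epsilon)\gamma$ with $\epsilon:=\beta/\gamma-1>0$. If $\mu_i=\lambda$ for some (necessarily unique, as the moments strictly decrease) index $i$, then $\x$ is finitely supported and we are done; otherwise all factors are nonzero and we estimate $|x_n|^2=\prod_{i=0}^{n-1}|1-\mu_i/\lambda|^2$. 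Using $\mu_n=\mathrm{O}(1/(n+1))$, valid because $R_{\mathbf{\mu}}$ is bounded \cite[Thm.~1, Lem.~2]{GGM}, one has $\mu_i/\lambda\to0$, hence $\log|1-\mu_i/\lambda|^2=-2\mu_i\,\re(1/\lambda)+\mathrm{O}(\mu_i^2)$ for $i$ past some index; summing, and absorbing the finitely many initial factors together with the convergent series $\sum_i\mu_i^2$ into a multiplicative constant $K=K(\lambda)$, we obtain $|x_n|^2\le K\exp(-\beta\, s_{n-1})$ for all $n$.

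The conclusion then follows at once: by hypothesis $\bigl(\exp(-\beta s_n)\bigr)_n=\bigl(\exp(-(1+\epsilon)\gamma s_n)\bigr)_n\in\ell^1$, so $\sum_n|x_n|^2\le K\sum_n\exp(-\beta\, s_{n-1})<\infty$, whence $\x\in\ell^2$ and $\lambda\in\sigma_p(R_{\mathbf{\mu}}^*)$; since $\lambda$ was an arbitrary point of the open disc $\{|z-1/\gamma|<1/\gamma\}$, the asserted inclusion holds. One may note that the existence of such a $\gamma$ forces $s_n\to\infty$; in the complementary case $\sum_n\mu_n<\infty$ every $\mu_k$ is already an eigenvalue, as observed after Theorem~\ref{point spectra}.

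The only genuinely delicate step is the asymptotics of the second paragraph: one must separate the finitely many initial indices where $\mu_i/\lambda$ need not be small — and could even equal $1$ — from the tail, and control both $\sum_i\mu_i^2$ and the $\mathrm{O}(\mu_i^2)$ errors so that they contribute an honest finite constant rather than something behaving badly; the rest is the formal recursion and the elementary geometry of the disc. As with Theorem~\ref{point spectra}, the argument uses only that $(\mu_n)$ is positive, with distinct terms, and $\mathrm{O}(1/(n+1))$, so it applies to more general terraced matrices.
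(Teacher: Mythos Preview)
Your proof is correct and follows essentially the same route as the paper's: derive the recursion $x_{i+1}=(1-\mu_i/\lambda)x_i$ from the upper-triangular system, then estimate the resulting product to obtain $\ell^2$-membership from the $\ell^1$ hypothesis on $\exp(-(1+\epsilon)\gamma s_n)$. The only differences are in execution --- the paper uses the global bound $|1-\mu_j\nu|^2\le\exp(|\nu|^2\mu_j^2-2\mu_j\,\re\nu)$ (an instance of $1+x\le e^x$) rather than your Taylor expansion of $\log|1-\mu_i/\lambda|^2$, so it avoids splitting off initial indices --- and, in your favour, you explicitly verify that the formal solution is a genuine eigenvector once it is known to lie in $\ell^2$, a point the paper passes over.
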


The proof follows the approach of \cite{BHS} to compute the point spectrum of the adjoint of the Ces\`aro operator, though it is a bit more involved.

\begin{proof}
Suppose $\x=(x_n)_{n\geq 0} \in \ell^2$ is an eigenvector of $R_{\mathbf{\mu}}^*$ associated to  $\lambda$, namely,
$R_{\mathbf{\mu}}^* \x= \lambda \x$.
We thus have
\[
\sum_{k=m}^\infty x_k \mu_k = \lambda x_m
\]
for $m=0,1,2,\ldots$. By subtracting consecutive values of this expression, there follows the recurrence relation
$$\lambda(x_m-x_{m-1})=x_m \mu_m.$$
Clearly $\lambda=0$  gives only the trivial solution $\x=0$.

For the second half of the theorem, assume $\lambda \in \{z \in \CC: |z-1/\gamma|<1/\gamma\}$ and write $\nu=1/\lambda$.
If the sequence $(x_n)_{n\geq 0}$ satisfies
$$\begin{bmatrix}
\mu_0 & \mu_1 & \mu_2 & \mu_3 & \mu_4 &  \cdots\\[3pt]
\0 & \mu_1 & \mu_2 & \mu_3 & \mu_4 & \cdots\\[3pt]
\0 & \0 & \mu_2 & \mu_3 & \mu_4 & \cdots\\[3pt]
\0 & \0 & \0 & \mu_3 & \mu_4 & \cdots\\[3pt]
\0 & \0 & \0 & \0    & \mu_4 & \cdots\\
\vdots & \vdots & \vdots & \vdots & \vdots & \ddots
\end{bmatrix}
\begin{bmatrix}
x_0 \\[3pt]
x_1 \\[3pt]
x_2 \\[3pt]
x_3 \\[3pt]
x_4 \\
\vdots
\end{bmatrix}
= \frac{1}{\nu} \begin{bmatrix}
x_0 \\[3pt]
x_1 \\[3pt]
x_2 \\[3pt]
x_3 \\[3pt]
x_4 \\
\vdots
\end{bmatrix}.
$$
we obtain
\[
x_n=x_0 \prod_{j=1}^n (1-\mu_j \nu), \qquad n=1,2,\ldots.
\]
Since $|1/\nu-1/\gamma|<1/\gamma$, a little computation shows that $\gamma< 2 \re \nu$. So, let us write $2 \re \nu= (1+\epsilon)\gamma>0$ for some $\epsilon>0$ and argue as in \cite[pp. 130--131]{BHS}. The key inequality here is
\[
| 1- \mu_j \nu|^2
= 1 - 2 \mu_j \re\nu+ |\nu|^2\mu_j^2 \le \exp (\mu_j^2|\nu|^2 -  \mu_j(1+\epsilon)\gamma).
\]
Now, since $R_{\mathbf{\mu}}$ is bounded in $\ell^2$, the series $\sum_{n=0}^\infty \mu_n^2$ converges and the estimate
\[
|x_n|^2 \le |x_0|^2 \frac{\exp{|\nu|^2 \sum_{j=1}^n \mu_j^2 }}{\exp(s_n (1+\epsilon)\gamma)}
\]
holds. Since $\exp( - (1+\epsilon)\gamma s_n)\in \ell^1$ for all $\epsilon>0$ by hypotheses, this shows that
$(x_n)_{n\geq 0}$ is an $\ell^2$ sequence.  Accordingly, $1/nu$ belongs to the the point spectrum of $R^*_\mu$, as we wished to prove.
\end{proof}

\medskip

With Theorem \ref{point spectra adjoint} at hand, it is possible to provide examples of Rhaly operators such that the point spectra
of their adjoints differ substantially.

For instance, let $\alpha>0$ and $\mu_{\alpha}$ be the Borel measure in $[0,1)$ with density $t^{\alpha}$, that is, $d\mu_{\alpha}(t)=t^\alpha dt$. An easy computation shows that
$(\mu_\alpha)_n= 1/(n+\alpha+1)$ for $n\geq 0$. Hence, $\sum_{j=0}^n (\mu_\alpha)_j$ grows as $\log n$ and accordingly,
$$
\{z \in \CC:|z-1|<1\} \subseteq \sigma_p(R_{\mathbf{\mu_{\alpha}}}^*).
$$

\medskip

On the other hand, let us consider $\mu=\delta_0 + c m$ for some $0<c<1$, where $m$ is the Lebesgue measure in $[0,1)$.
Here $\mu_0=1+c$ and $\mu_n=c/(n+1)$ for $n>0$, and as we discussed previously, $\sigma_p(R_\mu)=\{1+c\}$.
Observe that in this case $s_n$ grows as $c \log n$, so taking $\gamma=1/c$, it follows that
$$
\{z \in \CC:|z-c|<c\} \subseteq \sigma_p(R_{\mathbf{\mu}}^*).
$$
Note that $1+c$ is not in this disc.

\begin{remark}
If $R_\mu$ is compact, clearly the spectrum does not contain a disc and it is possible to provide examples of measures such that the hypotheses of Theorem \ref{point spectra adjoint} do not hold. For example, let
$\mu$ be the Lebesgue measure on $[0,r]$ for some $0<r<1$ and zero density in  $[r,1)$. In this case,  $\mu_n=r^{n+1}/(n+1)$ and $x_n$   fail to tend to $0$ unless $\lambda=\mu_k$ for some $k$. Note that no
positive $\gamma$ satisfies the hypotheses of Theorem \ref{point spectra adjoint}.
\end{remark}

\begin{remark}
We point out that Yildirim and coauthors \cite{Yildirim, Yildirim2} have studied some spectral properties of particular Rhaly operators acting on $\ell^p$ and $c_0$. Their results in the Hilbert space setting $\ell^2$ regarding the essential spectrum follow as particular instances of Theorems \ref{point spectra} and \ref{point spectra adjoint} and Proposition \ref{spectrum adjoint} along with \cite[Thm. 4.3.18]{davies}. This latter theorem states that if $\sigma_e(T)$ denotes the essential spectrum of a bounded operator on a Banach space, and $U$ be the unbounded component of $\CC \setminus \sigma_e(T)$, then $(zI -T)$ is a Fredholm operator of zero index for all $z \in U$ and $\sigma(T) \cap U$
consists of a finite or countable set of isolated eigenvalues with finite multiplicity.
\end{remark}

We close this section by
introducing a family of weighted composition operators $(W_t)_{0 \le t < 1}$,
which we use in two ways. Here, we obtain a simple proof that the numerical range
of $R_{\mathbf{\mu}}$
(and hence also the spectrum)
 is contained in closed right half-plane $\overline{\CC_+}$.
 In the next section we shall obtain information on its invariant subspaces
 by considering the $W_t$.

\subsection{Numerical range of $R_{\mathbf{\mu}}$}
For a Hilbert space operator $T$ on $H$ the numerical
range $W(T)$ is the image of the unit sphere of $H$ under the quadratic form $x\to \langle Tx, x \rangle$ associated with the operator, namely:
\[
W(T) = \{ \langle Tx, x \rangle: x \in H, \|x\|=1 \}.
\]
It is a classical fact that  $W(T)$ lies in the closed disc of radius $\|T\|$ centered at the origin and contains all the eigenvalues of $T$.
Moreover, the spectrum of an operator lies always in the closure of its numerical range and, though the numerical range is not invariant under similarities, it is invariant under unitary similarity. But, probably, the most remarkable result about the numerical range is the Toeplitz--Hausdorff Theorem, which asserts that the numerical range is always a convex set. We refer to Bonsall and Duncan monograph \cite{BoDu} for more on the subject.

A more general form of the following result is given in \cite[Thm.~1.2]{Rhaly1}. We provide
an alternative and possibly simpler proof, introducing a decomposition method that will later be applied to Hankel operators.

\begin{theorem}\label{numerical range}
Let $\mu$ be a positive finite Borel measure in $[0,1)$ and $R_{\mathbf{\mu}}$ the Rhaly operator associated to the moment sequence $(\mu_n)_{n\geq 0}$. Assume $R_{\mathbf{\mu}}$ is a bounded operator in $\ell^2$. Then $W(R_{\mathbf{\mu}}) \subset \ol{\CC_+}$ and, consequently, $\sigma(R_{\mathbf{\mu}})\subset \ol{\CC_+}$.
\end{theorem}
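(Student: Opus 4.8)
The plan is to exploit the factorization $R_{\mathbf{\mu}} = D_{\mathbf{\mu}}\,\mathcal{C}$ hinted at in the introduction, but rather than dealing with $\mathcal{C}$ directly I would use the integral representation of the moment sequence to disintegrate $R_{\mathbf{\mu}}$ as a (weak) integral of rank-one or shift-type pieces against $d\mu(t)$. Concretely, since $\mu_n = \int_0^1 t^n\,d\mu(t)$, the $(i,j)$ entry of $R_{\mathbf{\mu}}$ (which is $\mu_i$ for $j\le i$ and $0$ otherwise) equals $\int_0^1 t^i\,d\mu(t)$ for $j\le i$. So for $\x=(x_n)\in\ell^2$ one has
\[
\langle R_{\mathbf{\mu}}\x,\x\rangle
= \sum_{i\ge 0}\sum_{j\le i}\mu_i x_j \ol{x_i}
= \int_0^1 \Bigl(\sum_{i\ge 0} t^i \ol{x_i}\sum_{j\le i} x_j\Bigr)\,d\mu(t).
\]
The inner sum over $i$ is, for each fixed $t\in[0,1)$, a quadratic form in $\x$ that I would identify as coming from a fixed operator $W_t$ on $\ell^2$ (the weighted composition operators the authors announce): writing $P_i\x = \sum_{j\le i}x_j$ for the partial-sum and $M_t$ for multiplication by $(t^i)_i$, the inner sum is $\langle M_t L \x, \x\rangle$ where $L$ is the lower-triangular all-ones matrix, i.e. the inner operator is $M_t\, L$, a ``weighted Cesàro-type'' operator with weight $t^i$ in row $i$.

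The key step is then to show that for each fixed $t\in[0,1)$ the operator $M_t L$ has numerical range in $\ol{\CC_+}$, i.e. $\re\langle M_t L\x,\x\rangle\ge 0$ for all $\x\in\ell^2$; integrating against the positive measure $d\mu(t)$ then gives $\re\langle R_{\mathbf{\mu}}\x,\x\rangle\ge 0$ and hence $W(R_{\mathbf{\mu}})\subset\ol{\CC_+}$, and $\sigma(R_{\mathbf{\mu}})\subset\ol{W(R_{\mathbf{\mu}})}=\ol{\CC_+}$ by the classical spectral-inclusion property. For the single-$t$ estimate I would pass to the function-theoretic picture: identifying $\x$ with $f(z)=\sum x_n z^n\in H^2$, the partial sums correspond to $f(z)/(1-z)$ (formally) and the weight $t^i$ corresponds to dilation $z\mapsto tz$, so $M_tL$ becomes (a version of) the operator $f\mapsto \frac{1}{1-tz}f(tz)\cdot\frac{1}{\text{something}}$; more cleanly, $\re\langle M_tL\x,\x\rangle = \tfrac12\sum_{i}t^i\bigl(|P_i\x|^2 - |P_i\x|^2 + \cdots\bigr)$ — the honest computation is to set $\sigma_i=\sum_{j\le i}x_j$, note $x_i=\sigma_i-\sigma_{i-1}$ (with $\sigma_{-1}=0$), and write
\[
2\re\langle M_tL\x,\x\rangle = 2\re\sum_{i\ge 0}t^i\,\sigma_i\,\ol{(\sigma_i-\sigma_{i-1})}
= \sum_{i\ge 0}t^i|\sigma_i|^2 + \sum_{i\ge 0}t^i|\sigma_i-\sigma_{i-1}|^2 - \sum_{i\ge 1}t^i|\sigma_{i-1}|^2,
\]
using $2\re(a\ol{a-b}) = |a|^2+|a-b|^2-|b|^2$. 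Reindexing the last sum as $\sum_{i\ge 0}t^{i+1}|\sigma_i|^2$, the first and third sums combine to $\sum_{i\ge 0}(t^i-t^{i+1})|\sigma_i|^2 = (1-t)\sum_{i\ge 0}t^i|\sigma_i|^2\ge 0$, and the middle sum is manifestly $\ge 0$; hence $\re\langle M_tL\x,\x\rangle\ge 0$ for every $t\in[0,1)$. That is the heart of the matter.

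The main obstacle I anticipate is not the positivity computation itself — which is the elementary telescoping identity above — but the \emph{rigor of the disintegration}: justifying the interchange of $\sum_i\sum_j$ with $\int_0^1 d\mu(t)$, and making sense of $M_tL$ as a genuinely bounded operator on $\ell^2$ for each $t<1$ (it is: $L$ is unbounded but $M_tL$ has rows decaying like $t^i$, so a Schur test gives boundedness, or one notes $M_tL = M_t\mathcal{C}\,M_{(n+1)}$-type bounds are unnecessary and a direct Hilbert–Schmidt-in-a-weight argument suffices). I would handle the interchange by first restricting to finitely supported $\x$ (where everything is a finite sum and Fubini is trivial), obtaining $\re\langle R_{\mathbf{\mu}}\x,\x\rangle\ge 0$ there, and then extending to all of $\ell^2$ by density together with boundedness of $R_{\mathbf{\mu}}$. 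One should also check the boundary point $t\to 1$ causes no trouble — it does not, since $\mu$ is supported in $[0,1)$ and the estimate $(1-t)\sum t^i|\sigma_i|^2\ge 0$ holds uniformly (we only need nonnegativity, not a uniform bound). Finally I would remark that this decomposition is exactly the one reused in Section~\ref{Section 4} for Hankel operators, as the authors foreshadow.
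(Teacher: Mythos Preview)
Your proposal is correct, and the overall strategy coincides with the paper's: both disintegrate $R_{\mathbf{\mu}}$ as $\int_0^1 W_t\,d\mu(t)$ (your $M_tL$ is exactly the paper's weighted composition operator $W_t f(z)=f(tz)/(1-tz)$, i.e.\ the Rhaly operator for $\delta_t$), reduce via Fubini to showing $\re\langle W_t\x,\x\rangle\ge 0$ for each fixed $t\in[0,1)$, and then extend from a dense subset by boundedness.

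The difference lies only in how the single-$t$ positivity is established. The paper works in $H^2$, restricts to the dense set $f=(1-z)g$ with $g\in H^2$ (invoking that $1-z$ is outer), and uses the operator relation $TS=t^{1/2}ST$ for $Tg(z)=g(t^{1/2}z)$ together with Cauchy--Schwarz to get $\re\langle W_tf,f\rangle\ge(1-t^{1/2})\|Tg\|^2$. You stay in $\ell^2$, make the partial-sum substitution $\sigma_i=\sum_{j\le i}x_j$---which is precisely the sequence-side translation of the paper's $f=(1-z)g$---and arrive at the exact telescoping identity
\[
2\re\langle W_t\x,\x\rangle=(1-t)\sum_{i\ge 0} t^i|\sigma_i|^2+\sum_{i\ge 0} t^i|x_i|^2\ge 0.
\]
So the two arguments rest on the same change of variable; yours is marginally sharper (an identity rather than a Cauchy--Schwarz inequality) and more elementary in that it avoids any appeal to outer functions, while the paper's formulation emphasises the weighted-composition-operator structure that it goes on to exploit in Sections~\ref{Section 3} and~\ref{Section 4}. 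Your handling of the Fubini step (finitely supported $\x$ first, then density) is cleaner than worrying about boundedness of $M_tL$ directly, and suffices.
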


In order to prove Theorem \ref{numerical range}, we recall that the classical Hardy space $H^2$ consists of holomorphic functions $f(z)=\sum_{k=0}^{\infty} x_k z^k$ in the unit disc $\DD$  with Taylor coefficients $(x_k)_{k\geq 0} \in \ell^2$. An easy computation yields that $R_{\mathbf{\mu}}$ acts on $H^2$ as the integral operator
\begin{equation}\label{integral R}
R_{\mathbf{\mu}} f(z) = \int_0^1 \frac{f(tz)}{1-tz} \, d\mu(t),
\end{equation}
and it is bounded if and only if $\mu_n=\textrm{O}(1/(n+1))$ (see \cite{GGM}, for instance).

\smallskip

Clearly, \eqref{integral R} can be expressed as
\begin{equation}\label{integral weighted}
R_{\mathbf{\mu}} f(z) = \int_0^1  W_t f(z) \, d\mu(t),
\end{equation}
where $W_t$ (for $0 \le t < 1$) is the weighted composition operator
\[
W_t f(z)=\frac{f(tz)}{1-tz}.
\]
It is not difficult to prove that for each $0 \le t < 1$,  $W_t$ is a bounded operator in $H^2$.
Moreover, for $0 < t < 1$, $W_t$ is a Rhaly operator associated to $\delta_t$, a point mass at $t$, and since $\mu_n=t^n=o(1/n)$, it is a compact   operator (see \cite{GGM}, for instance). In such a case, the  spectrum is just the closure of its set of eigenvalues; that is,
\[
\sigma(W_t)=\sigma(W_t^*)= \{0\} \cup \{t^n: n \in \NN_0\}.
\]
and
\[
\sigma_p(W_t)=\sigma_p(W_t^*)=\{t^n: n \in \NN_0\}.
\]

\smallskip

We proceed now with the proof of Theorem \ref{numerical range}.

\smallskip

\smallskip

\begin{demode}{Theorem \ref{numerical range}}
Without loss of generality, we may argue in $H^2$ and consider the expression \eqref{integral weighted} for $R_{\mathbf{\mu}}$.
Hence,  an argument involving Fubini's Theorem yields
\[
\langle R_{\mathbf{\mu}} f,f \rangle = \int_0^1 \langle W_t f,f \rangle \, d\mu(t).
\]
Accordingly, it is sufficient to show that the numerical range of each weighted composition operator $W_t$ lies in $\ol{\CC_+}$.

\smallskip

Fix $t \in [0,1)$. It is sufficient to check that
$\re \langle W_t f,f \rangle \ge 0$ for all $f$ of the form
$f(z)=(1-z)g(z)$ with $g \in H^2$; these form a  dense set since $1-z$ is an outer function (see \cite{Du} or \cite{Ru}, for instance, for the \emph{inner-outer factorization} of Hardy functions).
So
\[
 \langle W_t f,f \rangle = \langle g(tz), (1-z)g(z) \rangle.
 \]

Let us write $T$ for the self-adjoint (diagonal) operator $Tg(z)=g(t^{1/2}z)$ and $S$ for the
shift operator on $H^2$; note that $TS=t^{1/2}ST$ (each sends $z^n$ to $t^{(n+1)/2}z^{n+1}$).
Then
\ber
\re \langle W_t f,f \rangle &=&   \re\langle T^2g,  g -Sg \rangle \\ &=& \langle Tg,Tg \rangle-\re \langle Tg, t^{1/2}STg \rangle\\
& \ge & \|Tg\|^2-t^{1/2}\|Tg\|^2
 \ge  0,
 \eer
which shows $W(W_t) \subset \ol{\CC_+}$ for every $t \in [0,1)$, as we wished. From here, the statement of the theorem follows.
\end{demode}

As a consequence, it follows from \cite[Cor. II.3.17, Prop. II.3.23]{EN} and \cite[Thm. IV.4.1]{SFBK}), the following surprising fact, which does not seem to have been previously observed, regarding Rhaly operators induced by positive finite Borel measures in $[0,1)$:

\begin{cor}\label{infinitesimal generator}
If $\mu$ is a positive finite Borel measure in $[0,1)$ and $R_{\mathbf{\mu}}$ is the Rhaly operator associated to the moment sequence $(\mu_n)_{n\geq 0}$ acting boundedly in $\ell^2$, then the operator $-R_{\mathbf{\mu}}$ is the infinitesimal generator of a contraction semigroup.
\end{cor}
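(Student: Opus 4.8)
The plan is to obtain the corollary as an immediate consequence of Theorem~\ref{numerical range} together with the standard description, for a \emph{bounded} operator, of when minus that operator generates a contraction semigroup. First I would recall that, by hypothesis, $R_{\mathbf{\mu}}$ is bounded on $\ell^2$, so the exponentials
\[
T(\tau):=e^{-\tau R_{\mathbf{\mu}}}=\sum_{j\geq 0}\frac{(-\tau)^j}{j!}\,R_{\mathbf{\mu}}^{\,j},\qquad \tau\geq 0,
\]
form a uniformly continuous (in particular strongly continuous) one-parameter semigroup whose infinitesimal generator is $-R_{\mathbf{\mu}}$. The only thing left to check is that each $T(\tau)$ is a contraction.

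Next I would invoke Theorem~\ref{numerical range}, which gives $W(R_{\mathbf{\mu}})\subset\ol{\CC_+}$, i.e.\ $\re\langle R_{\mathbf{\mu}}x,x\rangle\geq 0$ for every $x\in\ell^2$; note that the computation performed there in $H^2$ transfers verbatim to $\ell^2$, since the two spaces are unitarily identified via Taylor coefficients and the numerical range is a unitary invariant. Fixing $x\in\ell^2$ and setting $\varphi(\tau):=\|T(\tau)x\|^2$, I would differentiate, using $\frac{d}{d\tau}T(\tau)=-R_{\mathbf{\mu}}T(\tau)=-T(\tau)R_{\mathbf{\mu}}$, to obtain
\[
\varphi'(\tau)=-2\,\re\langle R_{\mathbf{\mu}}\,T(\tau)x,\;T(\tau)x\rangle\leq 0 ,
\]
so that $\varphi$ is non-increasing and $\|T(\tau)x\|\leq\|x\|$ for all $\tau\geq 0$. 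Hence $\|T(\tau)\|\leq 1$ for every $\tau\geq 0$, which is exactly the assertion that $-R_{\mathbf{\mu}}$ is the infinitesimal generator of a contraction semigroup. Alternatively, one may simply appeal to the Lumer--Phillips-type statements in \cite[Cor. II.3.17, Prop. II.3.23]{EN} and \cite[Thm. IV.4.1]{SFBK}, whose accretivity hypothesis is precisely what Theorem~\ref{numerical range} supplies.

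I do not expect any genuine obstacle here: the entire content sits in Theorem~\ref{numerical range}, and the present step is a short, routine deduction. The one point deserving a line of care is that accretivity is needed on all of $\ell^2$; this is why in Theorem~\ref{numerical range} the inequality $\re\langle W_t f,f\rangle\geq 0$ is first verified on the dense set $\{(1-z)g:\,g\in H^2\}$ and extended by continuity, and then, via Fubini, promoted to accretivity of $R_{\mathbf{\mu}}$ itself — after which the elementary differentiation argument above finishes the proof.
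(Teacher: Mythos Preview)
Your proposal is correct and follows the same route as the paper: deduce the corollary from Theorem~\ref{numerical range} via the standard Lumer--Phillips/accretivity criterion, citing precisely \cite[Cor.~II.3.17, Prop.~II.3.23]{EN} and \cite[Thm.~IV.4.1]{SFBK}. The paper leaves it at the citation, whereas you additionally spell out the elementary differentiation argument for $\|e^{-\tau R_\mu}x\|^2$, which is a welcome, self-contained alternative but not a different approach.
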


We observe that it is possible to compute the expression of the  contraction semigroup at least \emph{formally}. Nevertheless, we will make use of \eqref{integral weighted} in the next section to study the invariant subspaces of Rhaly operators.

\section{Invariant subspaces of Rhaly operators}\label{Section 3}

The main aim of this section is studying the invariant subspaces of Rhaly operators. Our starting point is the following result proved in \cite{GP24} for the classical Ces\`aro operator.

\begin{thm}[\cite{GP24}]\label{thm semigroup}
Let $\{\varphi_t\}_{t\ge 0}$ be the holomorphic self-maps of $\DD$ given by
\begin{equation}\label{semigroup C}
\varphi_t(z)= e^{-t}z + 1 - e^{-t}, \qquad (z\in \DD).
\end{equation}
A closed subspace $M$ in $H^2$ is invariant under the Ces\`aro operator if and only if its orthogonal complement $M^{\perp}$ is invariant under the semigroup of composition operators $\{C_{\varphi_t} \}_{t\geq 0}$.
\end{thm}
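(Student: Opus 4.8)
The plan is to relate the Cesàro operator $\mathcal{C}$ to the semigroup $\{C_{\varphi_t}\}_{t\ge 0}$ through an identity linking $\mathcal{C}$ with the resolvent (or generator) of that semigroup, and then invoke the general principle that a subspace $M$ is invariant under an operator $A$ if and only if $M^\perp$ is invariant under $A^*$. So the first step is to understand $\{C_{\varphi_t}\}$: the maps $\varphi_t(z)=e^{-t}z+1-e^{-t}$ form a one-parameter semigroup of affine self-maps of $\DD$ fixing the boundary point $1$, so $\{C_{\varphi_t}\}$ is a (strongly continuous) semigroup on $H^2$ with some infinitesimal generator $\Gamma$; one computes $\Gamma$ by differentiating $\varphi_t$ at $t=0$, obtaining the Koenigs-type vector field $(1-z)\frac{d}{dz}$, so that $\Gamma f(z) = (1-z)f'(z)$ on a suitable domain.

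The second step is the key computation: I would show that $\mathcal{C}^*$ is (up to an additive constant and sign) exactly this generator $\Gamma$, or more precisely that $\mathcal{C}^*$ is the resolvent-type operator built from $\{C_{\varphi_t}\}$. The cleanest route is probably to verify directly that $\mathcal{C}$ and the semigroup fit the Siskakis framework: Siskakis showed $\mathcal{C} = (I-\text{(generator)})^{-1}$ — concretely, $\mathcal{C} f = \int_0^\infty e^{-t} C_{\psi_t} f \, dt$ for an appropriate flow $\psi_t$. Taking adjoints, $\mathcal{C}^* f = \int_0^\infty e^{-t} C_{\varphi_t}^* f\,dt$ with $\varphi_t$ as in \eqref{semigroup C}. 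From such an integral representation, if $M^\perp$ is invariant under every $C_{\varphi_t}$ then it is invariant under $\mathcal{C}^*$ (the integral of vectors in a closed subspace stays in it, by a Riemann-sum/Bochner argument). Conversely — and this is the subtler direction — invariance of $M^\perp$ under $\mathcal{C}^*$ must be upgraded to invariance under each $C_{\varphi_t}$; since $\mathcal{C}^*$ generates (via its functional calculus) the same semigroup, one recovers $C_{\varphi_t} = $ (a limit of polynomials in the resolvent $\mathcal{C}^*$ applied at rescaled points), e.g. through the Hille–Yosida exponential formula $C_{\varphi_t} = \lim_{n\to\infty}\left(\tfrac{n}{t}(\tfrac{n}{t}I-\Gamma)^{-1}\right)^n$, each factor being a bounded function of $\mathcal{C}^*$ and hence preserving the $\mathcal{C}^*$-invariant subspace $M^\perp$. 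Finally, one translates back: $M$ invariant under $\mathcal{C}$ $\iff$ $M^\perp$ invariant under $\mathcal{C}^*$ $\iff$ $M^\perp$ invariant under all $C_{\varphi_t}$.

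I expect the main obstacle to be the converse direction, i.e.\ passing from $\mathcal{C}^*$-invariance back to $C_{\varphi_t}$-invariance. This requires that the semigroup $\{C_{\varphi_t}\}$ genuinely lies in (the strong closure of) the algebra generated by $\mathcal{C}^*$ and $I$; equivalently, that the generator $\Gamma$ and $\mathcal{C}^*$ determine each other via bounded functional calculus without loss of information. Care is needed because $\Gamma$ is unbounded, so one must argue at the level of resolvents: the resolvent $(\lambda I - \Gamma)^{-1}$ is a bounded operator expressible in terms of $\mathcal{C}^*$ for $\lambda$ in a half-plane, the exponential formula then reconstructs $C_{\varphi_t}$ as a strong limit of such resolvents, and closed subspaces are preserved under each step. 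A secondary technical point is the strong continuity and the precise domain identification of $\Gamma$ on $H^2$, and checking that $1-z$ being outer (already used in the proof of Theorem~\ref{numerical range}) gives the density needed to pin down $\mathcal{C}^*$ from its action on a dense set. Once these functional-analytic bookkeeping items are in place, the equivalence follows formally.
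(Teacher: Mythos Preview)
This theorem is quoted from \cite{GP24} and is not proved in the present paper; there is therefore no in-paper argument to compare your proposal against. The identity you need, $\mathcal{C}^* f=\int_0^\infty e^{-t}C_{\varphi_t}f\,dt$, is in fact used verbatim later in the paper (in the proof of Theorem~\ref{Rhaly invariant}), so the easy direction of your outline --- $M^\perp$ invariant under all $C_{\varphi_t}$ implies $M^\perp$ invariant under $\mathcal{C}^*$, hence $M$ invariant under $\mathcal{C}$ --- is exactly right. One small slip: you obtain this formula by ``taking adjoints'' of a Siskakis-type identity for $\mathcal{C}$ and writing $C_{\psi_t}^*=C_{\varphi_t}$, but adjoints of composition operators on $H^2$ are not composition operators in general. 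The correct route is to verify the integral representation for $\mathcal{C}^*$ directly (as the paper does), not via adjoints.

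Your handling of the converse is the substantive part and is essentially correct, though it deserves one line of justification you left implicit. From $\mathcal{C}^*=(I-\Gamma)^{-1}$ one gets, for real $\lambda>1$,
\[
R(\lambda,\Gamma)=(\lambda I-\Gamma)^{-1}=\mathcal{C}^*\bigl((\lambda-1)\mathcal{C}^*+I\bigr)^{-1},
\]
and since $\sigma(\mathcal{C}^*)=\{|z-1|\le 1\}$ the operator $(\lambda-1)\mathcal{C}^*+I$ is invertible for all $\lambda>1$; thus $R(\lambda,\Gamma)$ lies in the norm-closed algebra generated by $\mathcal{C}^*$ and $I$, and preserves any closed $\mathcal{C}^*$-invariant subspace. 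The Hille--Yosida exponential formula $C_{\varphi_t}=\mathrm{s}\text{-}\lim_{n\to\infty}\bigl(\tfrac{n}{t}R(\tfrac{n}{t},\Gamma)\bigr)^n$ then transfers invariance to each $C_{\varphi_t}$, exactly as you say. So your plan is a valid proof; whether it coincides with the original argument in \cite{GP24} (which proceeds via a unitary equivalence with a shift semigroup on a weighted $L^2$ space) cannot be judged from this paper alone.
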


The one-parameter family $\{\varphi_t\}_{t\ge 0}$  is, indeed, a \textit{holomorphic flow} (or \textit{holomorphic semiflow} by some authors), namely, a continuous
family that has a semigroup property with respect to composition (see the monograph \cite{BCD} for a detailed account of holomorphic flows).

Along these lines, the following result holds for any Rhaly operator when  considered as acting on $H^2$:

\begin{theorem} \label{Rhaly invariant}
Let $\textbf{a}=(a_n)_{n\geq 0}$ be a complex sequence
such that the sequence $((n+1)a_n)_{n \ge 0}$ is bounded, and $R_{\textbf{a}}$ the associated Rhaly operator acting boundedly on the Hardy space $H^2$.
Let $D_{\textbf{a}}$ be  the   operator with diagonal matrix $\diag((n+1)\,a_n)_{n \ge 0}$
with respect to the standard orthonormal basis in $H^2$. Then every closed subspace $M$ invariant under $C_{\varphi_t}D_{\overline{\textbf{a}}}$ for all $t\geq 0$ is invariant under $R_{\textbf{a}}^*$.
\end{theorem}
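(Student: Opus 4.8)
The plan is to realize $R_{\textbf{a}}^{*}$ as an absolutely convergent operator-valued integral built out of the very operators $C_{\varphi_t}D_{\overline{\textbf{a}}}$ appearing in the hypothesis, and then to invoke the elementary fact that a closed subspace is stable under such integrals. First I would reduce to the Ces\`aro operator: comparing matrix entries (the $(n,k)$-entry of $D_{\textbf{a}}\mathcal{C}$ is $(n+1)a_n\cdot\frac1{n+1}=a_n$ for $k\le n$ and $0$ otherwise) gives $R_{\textbf{a}}=D_{\textbf{a}}\mathcal{C}$; and since $((n+1)a_n)$ is bounded, $D_{\textbf{a}}$ and its adjoint $D_{\overline{\textbf{a}}}=\diag((n+1)\overline{a_n})_{n\ge0}$ are bounded, so $R_{\textbf{a}}^{*}=\mathcal{C}^{*}D_{\textbf{a}}^{*}=\mathcal{C}^{*}D_{\overline{\textbf{a}}}$.

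The key input is the integral representation $\mathcal{C}^{*}=\int_0^\infty e^{-t}C_{\varphi_t}\,dt$ for the semiflow $\{\varphi_t\}$ of \eqref{semigroup C}. One proves it as follows. The $C_0$-semigroup $\{C_{\varphi_t}\}_{t\ge0}$ on $H^2$ satisfies, by Littlewood's subordination estimate and $\varphi_t(0)=1-e^{-t}$, the bound $\|C_{\varphi_t}\|\le\bigl(\tfrac{1+|\varphi_t(0)|}{1-|\varphi_t(0)|}\bigr)^{1/2}=(2e^{t}-1)^{1/2}\le\sqrt2\,e^{t/2}$, so $\int_0^\infty\|e^{-t}C_{\varphi_t}\|\,dt<\infty$ and the integral converges absolutely in operator norm. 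Applying it to the monomials and using the substitution $u=e^{-t}$,
\[
\int_0^\infty e^{-t}C_{\varphi_t}(z^k)\,dt=\int_0^\infty e^{-t}(e^{-t}z+1-e^{-t})^k\,dt=\frac{1}{k+1}\sum_{j=0}^{k}z^{j}=\mathcal{C}^{*}(z^{k}),
\]
and since both sides are bounded operators agreeing on the dense span of the monomials, the identity follows (this is the ``resolvent-at-$1$'' picture of $\mathcal{C}^{*}$; cf.\ Siskakis \cite{siskakis} and \cite{GP24}). Composing on the right with $D_{\overline{\textbf{a}}}$ gives
\[
R_{\textbf{a}}^{*}=\mathcal{C}^{*}D_{\overline{\textbf{a}}}=\int_0^\infty e^{-t}\,C_{\varphi_t}D_{\overline{\textbf{a}}}\,dt .
\]

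Now let $M$ be a closed subspace with $C_{\varphi_t}D_{\overline{\textbf{a}}}M\subseteq M$ for every $t\ge0$, and fix $f\in M$. The map $t\mapsto g_t:=e^{-t}C_{\varphi_t}D_{\overline{\textbf{a}}}f$ is continuous (strong continuity of the semigroup) and satisfies $\|g_t\|\le\sqrt2\,e^{-t/2}\|D_{\overline{\textbf{a}}}\|\,\|f\|$, hence is Bochner integrable on $[0,\infty)$ with $\int_0^\infty g_t\,dt=R_{\textbf{a}}^{*}f$. Each $g_t$ lies in $M$, so for every $h\in M^{\perp}$ we have $\langle R_{\textbf{a}}^{*}f,h\rangle=\int_0^\infty\langle g_t,h\rangle\,dt=0$; therefore $R_{\textbf{a}}^{*}f\in(M^{\perp})^{\perp}=M$. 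As $f\in M$ was arbitrary, $R_{\textbf{a}}^{*}M\subseteq M$.

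The only genuinely delicate point is the convergence of the operator-valued integral: it needs the exponential growth rate of $\{C_{\varphi_t}\}$ to be strictly less than $1$, which the Littlewood estimate supplies (it is in fact $\le\tfrac12$), and this is precisely where the normalization $\varphi_t(0)=1-e^{-t}$ of the semiflow enters. Everything after that — the monomial computation identifying the integral with $\mathcal{C}^{*}$, and the stability of a closed subspace under a Bochner integral of $M$-valued functions — is routine. Alternatively, one may derive the representation of $R_{\textbf{a}}^{*}$ by taking the adjoint of $R_{\textbf{a}}=D_{\textbf{a}}\int_0^\infty e^{-t}C_{\varphi_t}^{*}\,dt$.
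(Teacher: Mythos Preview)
Your proof is correct and follows essentially the same route as the paper: factor $R_{\textbf{a}}^{*}=\mathcal{C}^{*}D_{\overline{\textbf{a}}}$, invoke the integral representation $\mathcal{C}^{*}=\int_0^\infty e^{-t}C_{\varphi_t}\,dt$, and conclude that common invariant subspaces of the $C_{\varphi_t}D_{\overline{\textbf{a}}}$ are invariant under $R_{\textbf{a}}^{*}$. The paper simply cites \cite{GP24} for the integral identity and leaves the last step implicit, whereas you supply the Littlewood growth bound, the monomial verification, and the Bochner-integral justification explicitly; these additions are welcome but do not change the argument.
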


\begin{proof}
Note that the adjoint of the Ces\`aro operator acting on the orthonormal basis $\{z^n\}_{n\geq 0}$ of $H^2$
\[
\mathcal{C}^*: z^n \to \dfrac{1}{n+1} \dfrac{1-z^{n+1}}{1-z}
\]
and the adjoint of $R_{\textbf{a}}$ acting also on $\{z^n\}_{n\geq 0}$
\[
R_{\textbf{a}}^*:  z^n \to \overline{a_n} \, \dfrac{1-z^{n+1}}{1-z},
\]
are related by means of the diagonal operator $D_{\textbf{a}}$ as follows:
$$\mathcal{C}^* D_{\textbf{a}}^*= \mathcal{C}^* D_{\overline{\textbf{a}}}= R_{\textbf{a}}^*,$$
or, equivalently,
$$D_{\textbf{a}} \mathcal{C}=R_{\textbf{a}}.$$

Now, we make use of the fact that $\mathcal{C}^*$ can be expressed in terms of the $C_0$-semigroup $\{C_{\varphi_t}\}_{t\geq 0}$ (see \cite{GP24}):
$$
\mathcal{C}^* f(z)=\int_0^\infty e^{-t} C_{\varphi_t} f(z) \, dt, \qquad (f\in H^2), %&=& \int_0^1 f(xz+1-x) \, dx = \int_1^z f(u) \, \frac{du}{z-1},
$$
to express
$$R_{\textbf{a}}^* f(z)= \int_0^\infty e^{-t} C_{\varphi_t} D_{\overline{\textbf{a}}} f(z) \, dt, \qquad (f\in H^2). $$
From here it follows that the common invariant subspaces of $C_{\varphi_t} D_{\overline{\textbf{a}}}$ are invariant
subspaces of $R_{\textbf{a}}^*$.
\end{proof}

\smallskip

When the Rhaly operator is induced by  a positive finite Borel measure $\mu$ in $[0,1)$, equation \ref{integral R} yields that $R_{\mathbf{\mu}}$ acts on $H^2$ as
$$
R_{\mathbf{\mu}} f(z) = \int_0^1 \frac{f(tz)}{1-tz} \, d\mu(t),  \qquad (f\in H^2),
$$
or in terms of the family of weighted composition operators $\{W_t\}_{0\le t<1}$, where $W_t f(z)=\frac{f(tz)}{1-tz}$:
\[
R_{\mathbf{\mu}} f(z) = \int_0^1  W_t f(z) \, d\mu(t), \qquad (f\in H^2).
\]
Upon changing variables $t=e^{-x}$, we have
that is,
\[
R_{\mathbf{\mu}} f(z) = \int_0^\infty  \widetilde W_x f(z) \, d\nu(x),
\]
where now
\begin{equation}\label{weighted 2}
\widetilde W_x f (z)= \frac{f(e^{-x}z)}{1-e^{-x}z} \qquad (0 < x \le \infty),
\end{equation}
and $d\nu(x)=d\mu(e^{-x})$.

Though the family $\{\widetilde W_x\}_{x>0}$ does not form a semigroup, we may still conclude the following:

\begin{prop} \label{invariant common}
Let $\mu$ be a positive finite Borel measure $\mu$ in $[0,1)$ and $R_{\mathbf{\mu}}$ the associated Rhaly operator acting boundedly on the Hardy space $H^2$.
Let $\{\widetilde W_x\}_{x>0}$ be the family of weighted composition operators in $H^2$ defined by \eqref{weighted 2}. If $M$ is a closed subspace invariant under
every $\widetilde W_x$, then $M$ is invariant under $R_{\mathbf{\mu}}$.
\end{prop}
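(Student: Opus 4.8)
The plan is to exploit the integral representation $R_{\mathbf{\mu}} f = \int_0^\infty \widetilde W_x f \, d\nu(x)$, with $d\nu(x)=d\mu(e^{-x})$, together with the identity $M=(M^{\perp})^{\perp}$ valid for any closed subspace of a Hilbert space. Concretely, I fix $f\in M$ and $h\in M^{\perp}$ and aim to show $\langle R_{\mathbf{\mu}} f,h\rangle=0$; granting this for every such $h$ forces $R_{\mathbf{\mu}}f\in (M^{\perp})^{\perp}=M$, which is exactly the invariance claimed.

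The first step is to pass everything to Taylor coefficients. Writing $f(z)=\sum_n x_n z^n$ and $\sigma_n=\sum_{k=0}^n x_k$ for its partial sums, a direct Cauchy-product computation gives $(\widetilde W_x f)(z)=\sum_n e^{-nx}\sigma_n z^n$, while the terraced matrix yields $(R_{\mathbf{\mu}} f)(z)=\sum_n \mu_n \sigma_n z^n$; since $\mu_n=\int_0^1 t^n\,d\mu(t)=\int_0^\infty e^{-nx}\,d\nu(x)$, the coefficients of $R_{\mathbf{\mu}}f$ are precisely the $\nu$-averages of those of $\widetilde W_x f$. With $h(z)=\sum_n h_n z^n$ this gives, at least formally,
\[
\langle R_{\mathbf{\mu}} f,h\rangle=\sum_n \mu_n\sigma_n\overline{h_n}=\sum_n \sigma_n\overline{h_n}\int_0^\infty e^{-nx}\,d\nu(x)=\int_0^\infty\Big(\sum_n e^{-nx}\sigma_n\overline{h_n}\Big)d\nu(x)=\int_0^\infty\langle \widetilde W_x f,h\rangle\,d\nu(x).
\]
The step I expect to need the most care is the Fubini interchange of $\sum_n$ and $\int_0^\infty$. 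It is justified by absolute convergence: by Tonelli,
\[
\int_0^\infty\sum_n e^{-nx}|\sigma_n|\,|h_n|\,d\nu(x)=\sum_n|\sigma_n|\,|h_n|\,\mu_n=\sum_n|(R_{\mathbf{\mu}}f)_n|\,|h_n|\le \|R_{\mathbf{\mu}}f\|_{H^2}\,\|h\|_{H^2}<\infty
\]
by Cauchy--Schwarz, using exactly the hypothesis that $R_{\mathbf{\mu}}$ is bounded, so that $R_{\mathbf{\mu}}f\in H^2$. (This also sidesteps the fact that the defining integral $\int_0^\infty \widetilde W_x f\,d\nu(x)$ need not converge absolutely in $H^2$-norm, as the Ces\`aro case $d\mu=dt$ already shows.)

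Once the identity $\langle R_{\mathbf{\mu}} f,h\rangle=\int_0^\infty\langle \widetilde W_x f,h\rangle\,d\nu(x)$ is established, the conclusion is immediate: for $f\in M$ the invariance hypothesis gives $\widetilde W_x f\in M$ for every $x>0$, hence $\langle \widetilde W_x f,h\rangle=0$ for all $x$ since $h\in M^{\perp}$; therefore $\langle R_{\mathbf{\mu}} f,h\rangle=0$, and letting $h$ range over $M^{\perp}$ yields $R_{\mathbf{\mu}}f\in M$. An essentially equivalent route would be to realise $R_{\mathbf{\mu}}f$ as an $H^2$-limit of Riemann-type sums $\sum_i \widetilde W_{x_i}f\,\nu(\Delta_i)\in M$ (each lying in $M$ since $M$ is a subspace) and invoke closedness of $M$; alternatively one can truncate $\mu$ to $[0,1-\varepsilon)$, where the integral becomes a genuine Bochner integral of an $M$-valued function, and then let $\varepsilon\to0$ using $\|R_{\mathbf{\mu}|_{[1-\varepsilon,1)}}f\|^2=\sum_n(\int_{1-\varepsilon}^1 t^n d\mu)^2|\sigma_n|^2\to0$ by dominated convergence. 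The duality argument above is the most economical of these.
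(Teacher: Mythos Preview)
Your argument is correct. The paper does not actually supply a proof of this proposition: it records the integral representation $R_{\mathbf{\mu}} f = \int_0^\infty \widetilde W_x f\, d\nu(x)$ and then states the proposition as an immediate consequence, without further comment. Your approach is therefore the same in spirit---deduce invariance from the integral formula---but you add the analytic detail the paper suppresses: you pass to Taylor coefficients, justify the Fubini interchange via Tonelli and the Cauchy--Schwarz bound $\sum_n \mu_n|\sigma_n|\,|h_n|\le \|R_{\mathbf{\mu}}f\|\,\|h\|$, and explicitly note (correctly) that the Bochner integral need not converge absolutely in $H^2$-norm, which is why the duality route through $M^{\perp}$ is the clean one. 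Your alternative sketches (Riemann sums in $M$; truncation of $\mu$ to $[0,1-\varepsilon)$) are also valid, but the pairing argument is indeed the most economical and is the implicit content of the paper's one-line ``conclusion''.
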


\subsection{Common invariant subspaces for $W_t$}
Motivated by Proposition \ref{invariant common}, we characterize the common invariant subspaces for the family $\{W_t\}_{0\le t<1}$.

\begin{theorem}\label{invariant W_t}
The only nonzero closed subspaces invariant under every $W_t$, $0\le t<1$, in the Hardy space $H^2$ are $z^k H^2$ for some $k \ge 0$.
\end{theorem}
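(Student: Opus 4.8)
The plan is to show that any nonzero closed subspace $M \subseteq H^2$ invariant under every $W_t$, $0 \le t < 1$, must be of the form $z^k H^2$. First I would establish the reverse inclusion, which is the easy direction: since $W_t f(z) = f(tz)/(1-tz)$ and $f(tz)$ has a zero of order at least $k$ at the origin whenever $f$ does, while $1/(1-tz)$ is analytic and nonvanishing at $0$, the subspace $z^k H^2$ is clearly $W_t$-invariant for each $t$. The substance is the forward direction.

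For the forward direction I would proceed as follows. Fix a nonzero $M$ invariant under all $W_t$. Let $k \ge 0$ be the smallest order of vanishing at $0$ among functions in $M$, and pick $f \in M$ with $f(z) = c z^k + \dots$, $c \ne 0$. The key idea is to extract information by letting $t \to 0$ and by differentiating/combining the operators $W_t$. Concretely, $W_0 = I$ is automatically in the picture (it corresponds to $t=0$), but more usefully, I would look at how $W_t f$ behaves as a function of $t$: writing $W_t f(z) = \sum_{n \ge 0} (\sum_{j=0}^n t^j [z^j](f)) \, z^n \cdot(\text{wait—more carefully})$, one has $W_t f(z) = \bigl(\sum_{m} f_m t^m z^m\bigr)\bigl(\sum_\ell t^\ell z^\ell\bigr) = \sum_n \bigl(\sum_{m=0}^n f_m t^n\bigr) z^n$, i.e. the $n$th Taylor coefficient of $W_t f$ is $t^n (f_0 + \dots + f_n) = t^n s_n(f)$ where $s_n(f) = \sum_{m=0}^n f_m$. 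Thus $W_t f$ is, coefficientwise, a power series in $t$, and $\frac{1}{t^k} W_t f \to s_k(f) z^k$ in $H^2$ as $t \to 0^+$ provided $f$ vanishes to order $k$ (so that $s_0(f) = \dots = s_{k-1}(f) = 0$ and the lowest surviving term is $t^k s_k(f) z^k = t^k f_k z^k$). Since $M$ is closed and $t^{-k} W_t f \in M$ for $t>0$, we get $z^k \in M$, and hence $z^k H^2 \subseteq M$ once we know $M$ contains $z^k$ and is invariant under multiplication by $1/(1-tz)$ for all $t$, whose closed linear span under such multiplications (together with the shift behaviour coming from composition) should fill out $z^k H^2$.

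The part requiring care — and I expect it to be the main obstacle — is upgrading "$z^k \in M$" to "$z^k H^2 \subseteq M$", and simultaneously ruling out that $M$ is strictly larger. For the first, applying $W_t$ to $z^k$ gives $t^k z^k/(1-tz) = t^k \sum_{j \ge 0} t^j z^{k+j}$, so $M$ contains $\sum_{j\ge 0} t^j z^{k+j}$ for every $t \in (0,1)$; these are the reproducing-type kernels $z^k(1-tz)^{-1}$, and I would argue that their closed linear span over $t \in (0,1)$ is exactly $z^k H^2$ — one way is to note that a function $z^k g(z) \in H^2$ orthogonal to all of them forces $g(t) = 0$ for all $t \in (0,1)$ (an inner product computation giving $\overline{g(t)}$ up to constants), hence $g \equiv 0$. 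This yields $z^k H^2 \subseteq M$. For the reverse, if $M \supsetneq z^k H^2$ then $M$ contains some function of vanishing order exactly $k' < k$, contradicting minimality of $k$ — so actually the minimality of $k$ already forces $M \subseteq \{f : \text{ord}_0 f \ge k\}$; combined with $z^k H^2 \subseteq M$ and the observation that $\{f : \text{ord}_0 f \ge k\} = z^k H^2$, we conclude $M = z^k H^2$. The one genuinely delicate point is the $H^2$-convergence of $t^{-k}W_t f \to f_k z^k$ as $t \to 0^+$: I would justify it by the explicit coefficient formula above, since $\|t^{-k} W_t f - f_k z^k\|^2 = \sum_{n > k} t^{2(n-k)} |s_n(f)|^2 \le t^2 \sum_{n>k} |s_n(f)|^2$, and $(s_n(f))_n \in \ell^2$ because $s_n(f)$ is the $n$th coefficient of $f/(1-z) = W_1 f$—but $W_1$ need not be bounded, so instead I would use $(s_n(f)) \in \ell^2$ more carefully, e.g. noting $s_n(f) = \langle f, (1, 1, \dots, 1, 0, \dots)\rangle$ need not be $\ell^2$ in general; the safe route is to bound $\|t^{-k}W_t f - f_k z^k\|$ directly using that $t^{-k}W_t f$ is the image of $f$ under a uniformly bounded (for $t$ in a compact subinterval) family of operators and that the coefficients converge pointwise, then invoke a dominated-convergence argument on the Hardy-space norm. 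This convergence step, and the density argument identifying the span of the kernels $z^k(1-tz)^{-1}$, are where I would focus the detailed work.
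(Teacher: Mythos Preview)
Your argument is correct and takes a genuinely different route from the paper's. The paper proceeds by first showing that any $M$ invariant under all $W_t$ is automatically invariant under the diagonal composition operators $C_{\phi_t}: f(z)\mapsto f(tz)$; this is done via the identity $W_x W_{t/x} f(z) = f(tz)/\bigl((1-tz)(1-xz)\bigr)$ together with a reproducing-kernel density argument showing that $f(tz)$ lies in the closed span of these products. Since each $C_{\phi_t}$ is compact with simple eigenvalues, the invariant-subspace theory of compact operators (Radjavi--Rosenthal) forces $M$ to be spanned by monomials, and then applying $W_t$ to $z^k$ shows the monomial set must be a tail $\{z^k, z^{k+1},\ldots\}$.

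Your approach is more elementary and self-contained: you isolate the minimal vanishing order $k$ in $M$, push $z^k$ into $M$ via the limit $t^{-k}W_t f \to f_k z^k$ as $t\to 0^+$, and then use a single reproducing-kernel argument on the functions $z^k/(1-tz)$ to fill out $z^k H^2$; the containment $M\subseteq z^k H^2$ is immediate from the definition of $k$. This sidesteps the compact-operator theory entirely. The convergence step you flagged as delicate is in fact routine: from the Cauchy--Schwarz bound $|s_n(f)|\le \sqrt{n+1}\,\|f\|$ one gets $\sum_{n>k} t^{2(n-k)}|s_n(f)|^2 \le \|f\|^2 \sum_{j\ge 1}(j+k+1)t^{2j}\to 0$, so there is no need to worry about whether $(s_n(f))\in\ell^2$. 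What the paper's approach buys in exchange is the structural byproduct that $W_t$-invariance implies $C_{\phi_t}$-invariance, a fact of possible independent interest.
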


\begin{proof}
First, we claim that for each $f \in H^2$ and $t \in (0,1)$ the function $f(tz)$ is in
the norm closed span of $W_x W_{t/x} f$ for $t<x<1$.
Note that,
\[
W_x W_{t/x} f(z) = W_x f (tz/x)/(1-tz/x) = \frac{f(tz)}{(1-tz)(1-xz)}.
\]
Now the closed span of the functions $1/((1-tz)(1-xz))$ for $t < x<1$  is the whole space $H^2$,
since for $g \in H^2$, if $g$ is orthogonal to these functions, then
\begin{eqnarray*}
\left\langle g, \frac{1}{(1-tz) (1-xz) }\right\rangle &=& \frac{1}{x-t}
\left(\langle g,-t/(1-tz)\rangle + \langle g,x/(1-xz) \rangle\right)\\
&=& \frac{1}{x-t} (-t g(t)+x g(x))=0,
\end{eqnarray*}
which, by the isolated zeros theorem, tells us that $zg=0$ and hence $g=0$.
Thus since $f(tz)$ is in $H^\infty$, we have that $f(tz)$ is in the closed span
of all the $W_x W_{t/x} f$, as claimed.

\smallskip

Hence, it follows that any subspace invariant under all the $W_t$ is also invariant under
all the composition operators  $C_{\phi_t}$ induced by $\phi_t(z)=tz$.
% \textbf{Shouldn't we have that it is invariant under every $C_{\phi_t}$, for $0\le t<1$?}.}

\smallskip

Now each operator $ C_{\phi_t}  $ is diagonal with respect to the usual
orthonormal basis $(z^n)_{n \ge 0}$, and since it is compact its invariant subspaces are spanned
by sets of eigenfunctions (see Chapter 1 of \cite{RR}).

Thus any common invariant subspace $M$ for $f \mapsto f(xz)/(1-xz)$, $t < x < 1$
is spanned by monomials (in fact it is sufficient that $M$ is invariant under
$f \mapsto f(x_nz)/(1-x_nz)$ for a sequence $(x_n)$ decreasing to $t$).
But with $f(z)=z^k$ we have
\[
f(xz)/(1-xz)=x^kz^k(1+xz+x^2z^2+ \ldots),
\]
from which Theorem \ref{invariant W_t} follows.
\end{proof}

\begin{remark} From Theorem \ref{invariant W_t} it follows that the only subspaces invariant under
every Rhaly operator $R_{\mathbf{\mu}}$ have the same form. Taking $\mu$ to be a delta point
mass at~$t$, one has that $R_{\mathbf{\delta}_t}=W_t$. Likewise, if $e_k(z)=z^k$, note that
\[
R_{\mathbf{\mu}}(e_k)(z) = z^k\int_0^1 \frac{ t^k}{1-tz} \, d\mu(t),
\]
so these spaces are also invariant under each $R_{\mathbf{\mu}}$.
\end{remark}

\section{A final remark: the Hilbert matrix }\label{Section 4}

In \cite{DS00}, Diamantopoulos and Siskakis observe that the Hilbert
matrix
\[
H= \left( \frac{1}{i+j+1} \right)_{i,j=0,1,2,\ldots}
\]
corresponds to a an operator on $H^2$ defined by
\[
\GH(f)(z)= \int_0^1 T_t (f)(z) \, dt,
\]
where $T_t$ is the weighted composition operator defined by
\beq\label{eq:formulads}
T_t(f)(z)=\frac{1}{(t-1)z+1} f\left(\frac{t}{(t-1)z+1} \right).
\eeq
Arguing as in Section \ref{Section 3}, it is possible to generalize this, by taking $\mu$ a positive Borel measure on $[0,1)$
and $V_t$ the weighted composition operator defined by
\begin{equation}\label{def Vt}
V_t f(z)= \frac{f(t)}{1-tz}.
\end{equation}
Then we note that  $\GH_\mu$ defined by
\[
\GH_\mu f(z) = \int_0^1 V_t (f) (z) \, d\mu(t)
\]
takes the function $e_n: z \mapsto z^n$ into
\[
\GH_\mu e_n(z) = \int_0^1 \frac{t^n} {1-tz} \, d\mu(t) =  \mu_n+ \mu_{n+1}  z+   \mu_{n+2} z^2 + \ldots,
\]
where, as before
\[
\mu_n = \int_0^1 t^n \, d\mu(t),
\]
and it therefore corresponds to the Hankel matrix
\begin{equation*}
\GH_\mu= \begin{bmatrix}
\mu_0 & \mu_1 & \mu_2 & \mu_3 & \mu_4 &  \cdots\\[3pt]
\mu_1 & \mu_2 & \mu_3 & \mu_4 & \mu_5 &  \cdots\\[3pt]
\mu_2 & \mu_3 & \mu_4 & \mu_5 & \mu_6 &  \cdots\\[3pt]
\mu_3 & \mu_4 & \mu_5 & \mu_6 & \mu_7 &  \cdots\\[3pt]
\mu_4 & \mu_5 & \mu_6 & \mu_7 & \mu_8 &   \cdots\\
\vdots & \vdots & \vdots & \vdots & \vdots & \ddots
\end{bmatrix}.
\end{equation*}

The formula in \eqref{eq:formulads} was used to calculate the norm of the original Hilbert matrix. The following theorem is similar to those stated for Rhaly operators $R_{\mathbb{\mu}}$:

\begin{theorem}\label{numerical range Hilbert matrix}
Let $\mu$ be a positive finite Borel measure in $[0,1)$ and $\GH_\mu$ the associated Hankel operator. Suppose that $\GH_\mu$ is a bounded operator in $\ell^2$. Then $W(\GH_\mu) \subset \ol{\CC_+}$. Consequently, $\sigma(\GH_\mu)\subset \ol{\CC_+}$ and the operator $-\GH_\mu$ is the infinitesimal generator of a contraction semigroup.
\end{theorem}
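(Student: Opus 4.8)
The plan is to run the same decomposition argument used for Theorem~\ref{numerical range}, now with the weighted composition operators $V_t$ of~\eqref{def Vt} in place of the $W_t$. The observation that makes the Hankel case even cleaner is that each $V_t$ is \emph{genuinely positive semidefinite}, not merely numerical-range-positive. Indeed, for a real point $t\in[0,1)$ let $k_t(z)=1/(1-tz)$ be the reproducing kernel of $H^2$ at $t$, so that $\langle f,k_t\rangle=f(t)$ for every $f\in H^2$. Then~\eqref{def Vt} says precisely that
\[
V_t f = f(t)\,k_t = \langle f,k_t\rangle\,k_t ,
\]
that is, $V_t$ is the rank-one operator $f\mapsto\langle f,k_t\rangle k_t$, and hence $\langle V_t f,f\rangle=|\langle f,k_t\rangle|^2=|f(t)|^2\ge 0$ for all $f\in H^2$.

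Next I would pass from the $V_t$ to $\GH_\mu$. Using the Fubini argument from the proof of Theorem~\ref{numerical range} — which is really a Tonelli argument here, the integrand being nonnegative — one obtains
\[
\langle \GH_\mu f,f\rangle = \int_0^1 \langle V_t f,f\rangle\,d\mu(t) = \int_0^1 |f(t)|^2\,d\mu(t)\ge 0 \qquad (f\in H^2).
\]
If one prefers to sidestep any interchange subtlety, the identity $\langle\GH_\mu p,p\rangle=\int_0^1|p(t)|^2\,d\mu(t)$ is immediate for a polynomial $p(z)=\sum_j c_j z^j$, since both sides equal $\sum_{i,j}c_i\overline{c_j}\,\mu_{i+j}$ (one uses that $t$ is real, so $\overline{t^j}=t^j$); it then extends to all $f\in H^2$ by boundedness of $\GH_\mu$ together with the pointwise convergence $f_n(t)\to f(t)$ on $[0,1)$ of the Taylor partial sums $f_n$ of $f$. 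Either way, $W(\GH_\mu)\subset[0,\infty)$, which is slightly stronger than the asserted $W(\GH_\mu)\subset\ol{\CC_+}$ and is nothing but the classical positivity of a moment (Hankel) matrix.

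The remaining two assertions are then formal, exactly as in Corollary~\ref{infinitesimal generator}. Since the spectrum of a bounded operator lies in the closure of its numerical range, $\sigma(\GH_\mu)\subset\ol{\CC_+}$. And since $\re\langle-\GH_\mu f,f\rangle\le 0$ for all $f$, the bounded operator $-\GH_\mu$ is dissipative, so by \cite[Cor.~II.3.17, Prop.~II.3.23]{EN} and \cite[Thm.~IV.4.1]{SFBK} — or directly, because $\frac{d}{ds}\|e^{-s\GH_\mu}f\|^2=-2\re\langle\GH_\mu e^{-s\GH_\mu}f,e^{-s\GH_\mu}f\rangle\le 0$ — the operator $-\GH_\mu$ generates the contraction semigroup $\{e^{-s\GH_\mu}\}_{s\ge 0}$. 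I do not expect any real obstacle: the entire proof is driven by the identification of $V_t$ with the rank-one positive operator built from the reproducing kernel of $H^2$, which is the ``decomposition method'' announced in the paragraph preceding Theorem~\ref{numerical range}; the only step needing any care at all is the integrability/interchange, handled either by Tonelli or by the polynomial computation above, and it is routine.
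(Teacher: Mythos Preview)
Your proof is correct and is essentially the same as the paper's: both identify $V_t$ with the rank-one operator $f\mapsto \langle f,k_t\rangle k_t$ built from the reproducing kernel $k_t(z)=1/(1-tz)$, compute $\langle V_tf,f\rangle=|f(t)|^2\ge 0$, and integrate against $\mu$. Your write-up adds the explicit observation that in fact $W(\GH_\mu)\subset[0,\infty)$ and offers the polynomial route to avoid the Fubini step, but the underlying argument is the one in the paper.
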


\begin{proof}
Reasoning as in Theorem \ref{numerical range}, it is sufficient to show that the numerical range of each rank-1 operator $V_t$, with $\mu_n=t^n$ for each $n$,  lies in $\ol{\CC_+}$.
Arguing with $\GH_\mu$ acting on the Hardy space $H^2$, we observe that for every Hardy function $f$:
\[
\langle V_t f,f \rangle = \left\langle \frac{f(t)}{1-tz},f(z) \right\rangle =
f(t)\ol{\left\langle f(z),\frac{1}{1-tz}\right  \rangle}= |f(t)|^2 \ge 0,
\]
using the properties of reproducing kernels in $H^2$.
\end{proof}

Finally, regarding common invariant subspaces of the rank-1 Hankel operators $\{V_t\}_{0\le t<1}$,  we note that

\begin{theorem}\label{invariant V_t}
Let $\mu$ be a positive finite Borel measure in $[0,1)$ and $\GH_\mu$ the associated Hankel operator. Assume $\GH_\mu$ is a bounded operator in $\ell^2$.
Let $V_t$ be the weighted composition operator in $H^2$ given by \eqref{def Vt}. Then, the only closed subspaces invariant under every $V_t$, $0\le t<1$, in the Hardy space $H^2$ are the trivial ones.
\end{theorem}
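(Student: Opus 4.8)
The plan is to show that any nonzero closed subspace $M$ invariant under every $V_t$, $0 \le t < 1$, must be all of $H^2$. The operator $V_t$ is rank one: $V_t f(z) = f(t)\, k_t(z)$ where $k_t(z) = 1/(1-tz)$ is (up to conjugation of the parameter, which is real here) the Szeg\H{o} reproducing kernel at $t$. So the range of $V_t$ is the one-dimensional span of $k_t$, and $V_t f = \langle f, k_t \rangle_{H^2}\, k_t = f(t) k_t$. The key structural fact is therefore: whenever $M$ contains some $f$ with $f(t) \ne 0$, it contains $k_t$.

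First I would dispose of the trivial obstruction. If $M \ne \{0\}$, pick $f \in M$, $f \not\equiv 0$. By the isolated zeros theorem $f(t) \ne 0$ for all but countably many $t \in [0,1)$; fix any such $t_0$. Then $V_{t_0} f = f(t_0) k_{t_0} \in M$, so $k_{t_0} \in M$. Next, apply $V_s$ to the element $k_{t_0} \in M$ for arbitrary $s \in [0,1)$: $V_s k_{t_0} = k_{t_0}(s)\, k_s = \frac{1}{1 - s t_0}\, k_s$, and since $1 - s t_0 \ne 0$ this shows $k_s \in M$ for every $s \in [0,1)$. Thus $M$ contains the full family $\{k_s : 0 \le s < 1\}$ of kernels along the real segment.

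It remains to argue that the closed span of $\{k_s : 0 \le s < 1\}$ is all of $H^2$. This is the same density argument already used in the proof of Theorem \ref{invariant W_t}: if $g \in H^2$ is orthogonal to every $k_s$, then $\langle g, k_s\rangle = \overline{g(s)}$... more precisely $\langle g, k_s \rangle_{H^2} = g(s)$, so $g$ vanishes on $[0,1)$, hence $g \equiv 0$ by the isolated zeros theorem. Therefore $M = H^2$, and the only closed $V_t$-invariant subspaces are $\{0\}$ and $H^2$, i.e. the trivial ones.

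I do not expect a serious obstacle here: the proof is genuinely short because $V_t$ is rank one and its range is a reproducing kernel, so invariance forces all the kernels $k_s$ into $M$ immediately, and completeness of $\{k_s\}$ finishes it. The only point requiring a word of care is the very first step --- ensuring $M$ contains a function not vanishing identically at some usable point $t_0$ --- which is handled by the isolated zeros theorem exactly as in Section \ref{Section 3}. One could also phrase the whole argument without kernels, noting $V_t f$ is a scalar multiple of $1/(1-tz)$ and these functions span $H^2$, but the reproducing-kernel viewpoint makes the bootstrapping $k_{t_0} \in M \Rightarrow k_s \in M$ completely transparent.
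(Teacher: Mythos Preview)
Your proof is correct and follows essentially the same approach as the paper: both use that $V_t$ is rank one with range spanned by the reproducing kernel $k_t(z)=1/(1-tz)$, and then invoke completeness of the kernels $\{k_t\}$ along $[0,1)$ via the isolated zeros theorem. The only cosmetic difference is that the paper observes directly that a nonzero $f\in M$ yields $k_t\in M$ for all but countably many $t$, whereas you first secure a single $k_{t_0}\in M$ and then bootstrap to every $k_s$ using $V_s k_{t_0}$; both routes arrive at the same density argument.
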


\begin{proof}
Let $M$ be a common invariant subspace for all the $V_t$, $0\le t<1$. Then if there is a function
$f \in M$ with $f(t) \ne 0$, we must have $1/(1-tz) \in M$.

Now for $f \ne 0$ in $M$, we cannot have $f(t)=0$ for more than
a countable set $S \subset [0,1)$.
Then $M$ contains the closed span of $1/(1-tz)$ for $t \in [0,1) \setminus S$, which is already
the whole of $H^2$, since   $g \in H^2$ is orthogonal to $1/(1-tz)$
if and only if $g(t)=0$.
\end{proof}

\end{document}